\patchcmd{\section}{\scshape}{\bfseries}{}{}
\renewcommand{\@secnumfont}{\bfseries}
\newcommand{\Z}{\mathbb{Z}}
\newcommand{\N}{\mathbb{N}}
\newcommand{\F}{\mathbb{F}}
\newcommand{\CC}{\mathbb{C}}
\newcommand{\R}{\mathbb{R}}
\newcommand{\Spec}{\textup{Spec}}
\newtheorem{tx}{Theorem}[section]
\newtheorem{txint}{Theorem}
\newtheorem*{txcob}{Cobham's Theorem}
\newtheorem{tc}{Corollary}[tx]
\newtheorem{tl}[tx]{Lemma}
\newtheorem{tp}[tx]{Proposition}
\theoremstyle{definition}
\newtheorem{td}[tx]{Definition}
\newtheorem*{te}{Example}
\newtheorem*{tes}{Examples}
\newtheorem*{tr}{Remark}
\newtheorem*{trs}{Remarks}
\date{\today}
\title{A graph-theoretic proof of Cobham's Dichotomy \\ for automatic sequences} 
\author[M. E.~Wessel]{Mieke Wessel} 
\address{\normalfont Mathematisch Instituut, Universiteit Utrecht, Budapestlaan 6, 3584 CD Utrecht, Nederland} 
\curraddr{\normalfont Mathematisches Institut, Georg-August Universit\"at, Bunsenstrasse 3-5, 37073 G\"ottingen, Deutschland}
\email{mieke.wessel@mathematik.uni-goettingen.de}
\begin{document}

	\begin{abstract} \noindent We give a new graph-theoretic proof of Cobham's Theorem which says that the support of an automatic sequence is either sparse or grows at least like $N^\alpha$ for some $\alpha > 0$. The proof uses the notions of tied vertices and cycle arboressences. With the ideas of the proof we can also give a new interpretation of the rank of a sparse sequence as the height of its cycle arboressence. In the non-sparse case we are able to determine the supremum of possible $\alpha$, which turns out to be the logarithm of an integer root of a Perron number. 
	\end{abstract}
\maketitle
\section{Introduction}
The objects of interest in this paper are automatic sequences. A sequence $(a_i)_{i\in \N}$ is called \textit{automatic} if it can be derived from an automaton. Given some positive integer $m$, an \textit{$m$-automaton} is a finite directed graph (or \textit{digraph}) where each vertex has exactly $m$ outgoing edges which are labeled by $0, 1, \ldots, m-1$. Furthermore, each vertex is labeled with an element from some set $A$ containing $0$ and one vertex is additionally labeled `Start'. To construct the value $a_i \in A$ out of this digraph, we take the base-$m$ expansion of $i$ and, beginning at `Start', follow the edges in the graph corresponding to its digits from back to front. The label of the vertex one ends up in defines the value of $a_i$. The precise definitions and some examples are given in Section \ref{Sdefaut}. 

Automatic sequences arise in many areas of mathematics, such as number theory, graph theory, formal languages and computer science. In this paper we will give a new graph-theoretic proof of the following theorem of Cobham, which was originally proved in the terminology of formal languages. 

\begin{txcob}[\label{Tcobhamdummy}\cite{cobham1972tagsequences}]
	Let $(a_i)_{i \in \N}$ be an automatic sequence. Then the support up to $N$, defined by $E((a_i)_{i \in \N})_N := \{ i \leq N \mid a_i \neq 0 \}$, meets one of the following two conditions:
	\begin{enumerate}[(i)]
		\item there exists some $\alpha \in (0, 1]$ such that $\# E((a_i)_{i \in \N}) \geq N^\alpha$ for sufficiently large $N$.
		\item there exists some $r \in \R_{\geq 0}$ such that $\# E((a_i)_{i \in \N})  = O(\log(N)^r)$. In this case we call $(a_i)_{i \in \N}$ sparse.
	\end{enumerate} 
\end{txcob}
(Caution: this is not the other `Cobham's Theorem', which says that a sequence is both $k$- and $m$-automatic for some multiplicative independent integers $k$ and $m$ if and only if it is also $1$-automatic or equivalently, ultimately periodic \cite{cobham1969basedependence}.)

The advantages of the new proof are twofold. The first one comes from a renewed interest in the theorem from a number-theoretic perspective due to its relevance in study of the Nottingham groups \cite{byszewski2020automata}, also see the example in Section \ref{Sdefaut}. The second advantage is that the proof depends on recognizing which structures in the automaton lead to sparse or non-sparse behavior of the sequence. Using this analysis, we can also say which values of $r$ and $\alpha$ are in a sense optimal, see Theorem \ref{Tranksparsedummy} and \ref{Tnonsparsedummy}. The result for $r$ was proven earlier in \cite{szilard1992regularlanguages} using different methods and the bounds for both $r$ and $\alpha$ were proven by Schur in \cite{shur2008combcompreglang}. We use similar ideas as Schur, but in our proof, that we found independently, we streamline the argument using more graph theoretical tools. 

The proof we will give is based on an explicit characterization of automata that generate a sparse sequence, found separately by Ibarra and Ravikumar \cite{ibarra1986decisionproblems} and Szilard, Yu, Zhang and Shallit \cite{szilard1992regularlanguages}. The second characterization was first given in the formal language setting, but has since been translated to a graph-theoretic one by Byszewski and Konieczny \cite[Prop. 3.4]{byszewski2019autoseqgenpoly} and by Byszewksi, Cornelissen and Tijsma \cite[Prop. 12.1.2]{byszewski2020automata}. In this new formulation it states that a sequence is sparse if and only if its automaton does not contain any tied vertices. A vertex $v$ is called \textit{tied} if \emph{(i)} there exists a walk from $v$ to a non-zero labeled vertex and \emph{(ii)} there exist at least two different walks of the same length from $v$ to itself. 

The starting point is to look at the correspondence between $\N$ and the walks in the automaton that end in a non-zero labeled edge, given by our algorithm to determine $a_i$. Evaluating $\# E_N$ then boils down to counting the number of walks that are associated with some $i \leq N$ and end in a non-zero labeled vertex. Hence, it will be relevant in how many ways we can reach some non-zero vertex within a bounded number of steps. This highly depends on whether or not there exists a tied vertex between the `Start'-vertex and the non-zero vertex. From this point of view it is relatively easy to find the exponential lower bound in the case of a tied vertex. The polylogarithmic upper bound when there are no tied-vertices is a bit harder and requires a more careful look at the structure of the automaton. 

One of the key ideas is that in the case of no tied vertices we may assume that the automaton has the structure of a cycle arboressence. Roughly, a \textit{cycle arboressence} is a directed tree where all the vertices that are not leaves are replaced by cycles and all the edges by smaller connected digraphs that do not contain any cycles (that is, the edges become actual directed trees). The leaves will be required to have only outgoing edges that are self-loops and should have label $0$. From each vertex that is not a leaf it should be possible to reach a non-zero vertex. See Definition \ref{Dcycarbo} and Figure \ref{Fcyclearbo} for a more precise depiction. For every non-zero vertex in this cycle arboressence, there is a unique sequence of cycles through which one must (partially) pass to reach it. This can be used to count the number of walks and give the polylogarithmic upper bound $O(\log(N)^r)$. Furthermore, the structure gives us a nice way to determine the smallest possible value for $r$. This value depends on the \textit{height} of the arboressence, defined as the maximum number of cycles one can pass through consecutively before reaching a leaf.

\begin{txint}\label{Tranksparsedummy}
	Let $(a_i)_{i \in \N}$ be a sparse automatic sequence. Then, there exists an automaton generating this series that is also a cycle arboressence. Write $r$ for the height of this cycle arboressence. 
	Then $$ \inf\{r' \in \R_{\geq 0} \mid \# E((a_i)_{i \in \N})_N = O(\log(N)^{r'})\} = r-1.$$
	Furthermore, the infimum is attained.
\end{txint}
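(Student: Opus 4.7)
My plan is to proceed in three stages: producing an equivalent cycle arborescence, deriving the upper bound $\#E_N = O((\log N)^{r-1})$, and matching it with a lower bound that forces the infimum to equal $r-1$.

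For the first stage, I would exploit the characterization quoted in the introduction: sparseness is equivalent to the absence of tied vertices. After discarding every vertex from which no non-zero vertex can be reached (collapsing them into a single zero-labeled leaf with $m$ self-loops), the no-tied-vertex hypothesis forces each non-trivial strongly connected component to be a simple cycle, since two distinct equal-length self-walks at any vertex would witness tiedness. The condensation is then a DAG whose internal nodes are simple cycles. I would turn this DAG into a tree by unfolding: whenever a cycle is reached from two distinct parent cycles, duplicate it together with its subtree. Finiteness and acyclicity of the DAG guarantee termination, and because duplicated vertices carry the same labels and outgoing labeled edges as the originals, the generated sequence is preserved. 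The result is a cycle arborescence.

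For the second stage, fix a non-zero vertex $v$ at some depth $k(v)$; since leaves carry label zero, $k(v) \leq r-1$. Let $C_1, \ldots, C_{k(v)}$ be the unique chain of cycles from the root cycle to the cycle of $v$, with lengths $\ell_1, \ldots, \ell_{k(v)}$. After fixing the entry and exit points of each cycle, a walk of length exactly $n$ from Start to $v$ is determined by non-negative integers $(a_1, \ldots, a_{k(v)})$ recording extra full loops around each cycle, subject to a single affine constraint $\sum_i a_i \ell_i = n - C_v$ for a constant $C_v$. Elementary lattice-point counting in a simplex bounds the number of solutions by $O(n^{k(v)-1})$. Summing over walk lengths $n \leq L := \lfloor \log_m N \rfloor + 1$ and the finitely many non-zero vertices yields
\[
\#E_N \;\leq\; \sum_v O(L^{k(v)}) \;=\; O(L^{r-1}) \;=\; O((\log N)^{r-1}),
\]
placing $r-1$ in the set.

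For the third stage, this count is sharp on a well-chosen vertex. Since every non-leaf vertex reaches a non-zero one and edges in the arborescence descend, the deepest non-leaf cycle must contain some non-zero $v^*$, necessarily at depth $r-1$. The number of length-$n$ walks from Start to $v^*$ is $\Theta(n^{r-2})$ for $n$ in the correct residue class modulo $\gcd(\ell_1, \ldots, \ell_{r-1})$. Summing over admissible $n \leq \lfloor \log_m N \rfloor$ produces $\Omega((\log N)^{r-1})$ walks, all corresponding to genuine $i \leq N$, whence $\#E_N = \Omega((\log N)^{r-1})$. This rules out every $r' < r-1$, so the infimum equals $r-1$ and is attained. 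The main obstacle throughout is Stage 1: the unfolding has to be shown to terminate and to preserve the automatic sequence, and the no-tied-vertex hypothesis is precisely what makes the SCC structure rigid enough for this to succeed; Stages 2 and 3 then reduce to routine simplex lattice-point counting.
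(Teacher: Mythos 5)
Your Stage 1 is essentially the paper's construction (the no-tied-vertex hypothesis forces every non-trivial strongly connected component to be a simple cycle, and one unfolds the resulting DAG by duplicating sub-automata), so that part is fine. The genuine gap is in Stages 2 and 3, and it is the claim that a non-zero vertex has depth $k(v)\le r-1$. This is false: the height $r$ counts the maximal chain of cycles, and a non-zero vertex may sit on the deepest cycle itself, so $k(v)=r$ occurs. Indeed, in the automaton of Figure~\ref{FautoF2.a} (support $\{2^k\}$, height $r=2$, rank $1$) the unique non-zero vertex lies on the second cycle. With the correct bound $k(v)\le r$, your Stage 2 computation --- $O(n^{k(v)-1})$ walks of each length $n$, summed over $n\le L$ --- only yields $\#E_N=O((\log N)^{r})$, one power too many; in that example it gives $O((\log N)^2)$ against the true $\Theta(\log N)$. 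The source of the loss is the summation over all lengths $n\le L$: a walk and its extension by a trailing $0$-edge (a leading zero of the integer) represent the same $i$, so each element of $E_N$ is counted up to $L$ times. The repair, which is what the paper does, is to use leading-zero invariance to put $\{0,\dots,m^{L}-1\}$ in bijection with walks of length \emph{exactly} $L$ and count only those; for a vertex at depth $j$ the lattice-point count $O(L^{j-1})$ is then the final answer with no further summation over lengths, giving $O((\log N)^{r-1})$.

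The same off-by-one infects Stage 3: the relevant $v^*$ is a non-zero vertex reachable only by passing through $r$ cycles (it need not lie \emph{on} the deepest cycle --- it may be a tree vertex below it --- but it exists because every vertex of the deepest cycle reaches a non-zero vertex), and the number of length-$n$ walks to it is $\Theta(n^{r-1})$, not $\Theta(n^{r-2})$. Once this is corrected, a \emph{single} length $n=L$ already supplies $\Omega(L^{r-1})$ distinct integers in $E_N$ (distinct equal-length walks from `Start' have distinct digit strings, hence distinct sizes), so your summation over admissible lengths --- which again risks counting one integer several times via leading zeros --- is both unnecessary and not justified as written. With these repairs the argument closes and is then essentially the paper's, whose lower bound is an induction on depth showing that the length-$k$ walk count to a depth-$j$ vertex eventually exceeds $C\,k^{j-1-\varepsilon}$ for every $C,\varepsilon>0$.
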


To also determine the biggest value for $\alpha$ in the case of a tied vertex, we use spectral graph theory, which is also used in \cite{krieger2007growthratepolytime} and \cite{shur2008combcompreglang}. We start by focusing on the connected sub-digraphs of the automaton that consist solely of tied vertices, since we already know by Cobham's Theorem that the other parts of the graph contribute less. For each tied vertex $v$ we call the largest of these containing $v$ the \textit{strongly connected sub-digraph of the automaton at $v$}. The adjacency matrix and its maximal absolute eigenvalue, also called its \textit{spectral radius}, tell us something about the number of walks there are of certain length inside the sub-digraph and in particular with which exponent the number of walks grows. This leads to the following theorem. 

\begin{txint}\label{Tnonsparsedummy}
	Let $(a_i)_{i \in \N}$ be a non-sparse automatic sequence, and let $V'$ be the subset of tied vertices that occur in some automaton producing $(a_i)_{i \in \N}$. Writing $\rho_v$ for the spectral radius of the strongly connected sub-digraph at $v \in V'$, and $\beta := \max_{v \in V'}(\rho_v)$, we have
	$$\sup\{\alpha \in (0, 1] \mid \# E((a_i)_{i \in \N}) \geq N^\alpha \textup{ for all sufficiently large $N$}\} = \log_m(\beta).$$ 
\end{txint}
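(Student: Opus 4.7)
The plan is to translate the counting of $\# E_N$ into a walk-counting problem in the automaton, and then to estimate these walk counts via the Perron--Frobenius theorem applied to the strongly connected sub-digraphs. Setting $L := \lfloor \log_m N \rfloor$, the generating algorithm identifies integers $i < m^L$ with walks of length $L$ starting at `Start', so that $\# E_N$ is comparable, up to a factor of $m$, to the number of such walks ending at a non-zero labelled vertex. After removing vertices unreachable from `Start' and ignoring $0$-labelled absorbing sinks (which contribute nothing to $\# E_N$), I may assume every vertex is reachable from `Start' and reaches a non-zero vertex.

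For the upper bound $\sup \leq \log_m \beta$, decompose the trimmed automaton into strongly connected components. Each counted walk follows a sequence $S_0, S_1, \ldots, S_t$ of distinct SCCs (with $t < |V|$) and spends $n_i \geq 0$ steps in $S_i$, where $\sum n_i + t = L$. The number of walks inside $S_i$ of length $n_i$ between fixed endpoints is at most $C n_i^{|V|} \rho_{S_i}^{n_i}$ by standard Perron--Frobenius or Jordan bounds on the non-negative matrix $A_{S_i}$. Every SCC on such a path is either a single directed cycle or singleton (so $\rho_{S_i} \leq 1$) or consists entirely of tied vertices (so $\rho_{S_i} = \rho_v$ for any $v \in S_i \cap V'$); since $\beta > 1$ in the non-sparse case, $\rho_{S_i} \leq \beta$ in either case. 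Multiplying the factors and summing over the finitely many SCC-paths and time allocations gives $\# E_N = O((\log N)^{c} N^{\log_m \beta})$ for some constant $c$, which is $o(N^\alpha)$ for every $\alpha > \log_m \beta$.

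For the lower bound $\sup \geq \log_m \beta$, pick $v \in V'$ achieving $\rho_v = \beta$, let $S$ be its strongly connected sub-digraph, and let $p$ be the period of $A_S$. Fix a walk $p_1$ from `Start' to $v$ of length $\ell_1$, and for each residue class $r \in \Z/p\Z$ select a vertex $u_r \in S$ together with a walk $p_3^{(r)}$ from $u_r$ to a non-zero labelled vertex of length $\ell_3^{(r)}$; the $u_r$ can be chosen so that walks from $v$ to $u_r$ inside $S$ have lengths covering the residue class $r$, because the $p$ cyclic classes of $S$ are all non-empty. Perron--Frobenius applied to the irreducible non-negative matrix $A_S$ then produces at least $c \beta^n$ walks of length $n$ from $v$ to $u_r$ for every sufficiently large $n$ in that class. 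Given $N$, select $r$ so that $n := L - \ell_1 - \ell_3^{(r)}$ lies in the admissible class and concatenate $p_1$, an internal walk of length $n$, and $p_3^{(r)}$; these yield $\geq c \beta^{L - O(1)}$ distinct walks of length at most $L$ from `Start' to a non-zero vertex, whence $\# E_N \geq c' N^{\log_m \beta}$ for all large $N$, and every $\alpha < \log_m \beta$ lies in the target set.

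The main obstacle I anticipate is the periodicity of $A_S$: the statement requires $\# E_N \geq N^\alpha$ for \emph{all} sufficiently large $N$, so a single-residue Perron--Frobenius bound is insufficient, and the device of sweeping through residue classes by varying the exit vertex $u_r$ is essential. A secondary technicality is verifying that every SCC appearing on a walk to a non-zero labelled vertex is either a simple cycle (with $\rho \leq 1$) or consists entirely of tied vertices (so $\rho \in \{\rho_v : v \in V'\}$), which reduces to noting that tiedness is invariant on a strongly connected sub-digraph that reaches a non-zero vertex and contains more than one cycle.
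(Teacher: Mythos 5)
Your upper bound takes a genuinely different (and arguably cleaner) route than the paper's: you decompose each accepted walk by the chain of strongly connected components it traverses and bound each block by $O(n^{|V|}\rho_S^{\,n})$, obtaining $\#E((a_i)_{i\in\N})_N = O((\log N)^c N^{\log_m\beta})$ for \emph{all} $N$, whereas the paper decomposes by the sequence of distinct \emph{tied vertices} visited, bounds the resulting convolution $\sum_{\vec k}\prod_i\Omega_{\leq k_i}(v_i)$ (Lemma \ref{Latmostntoexactn}, Corollary \ref{Catmostntoexactn}), and only derives a contradiction along infinitely many $N=m^k$ --- which suffices, but is weaker. Your key verification --- that every SCC met on a walk ending at a non-zero vertex is either a single cycle or singleton (spectral radius $\leq 1 < \beta$) or consists entirely of tied vertices, so that its spectral radius is some $\rho_v\leq\beta$ --- is correct: by Lemma \ref{Lvcyc} a component with two distinct first-return walks at some vertex makes that vertex tied once it reaches a non-zero vertex, and tiedness then propagates to the whole component.

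The gap is in your lower bound, at exactly the step you flagged as the main obstacle. Choosing exit vertices $u_r$ in the $p$ cyclic classes does not let you realize every total length $L$: if $u_r$ lies in class $r$ relative to $v$, the admissible internal lengths satisfy $n\equiv r\pmod p$, so the requirement $n=L-\ell_1-\ell_3^{(r)}$ becomes $L\equiv \ell_1+r+\ell_3^{(r)}\pmod p$, and nothing in your construction forces $r\mapsto r+\ell_3^{(r)}\bmod p$ to be surjective. For instance, with $p=2$, if every edge leaving $S$ toward a non-zero vertex departs from the same cyclic class and the tails outside $S$ all have the same parity, then $r+\ell_3^{(r)}$ is constant mod $2$ and half of all lengths $L$ are never realized by walks through $S$; the residue sweep cannot repair this. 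Fortunately the obstacle is illusory for the statement being proved: $\#E((a_i)_{i\in\N})_N$ is monotone in $N$, so it suffices to realize \emph{some} length $L'\in[L-D,L]$ with at least $c\beta^{L'}$ walks, where $D$ is a constant (Perron--Frobenius, or Theorem \ref{TBoundOmega}(ii) with the remark following it, gives this), and the lost factor $\beta^{-D}$ is harmless because only strict inequalities $\alpha<\log_m\beta$ must be achieved. The paper sidesteps periodicity entirely by a different device: it fixes a single length $x$ with $\log_m(\Omega_x(v))/x$ close to $\log_m\rho_v$ (Corollary \ref{CsupB}) and concatenates $k$ closed walks of that one length between a fixed entry walk and a fixed exit walk, exactly as in the tied-vertex half of Theorem \ref{Tautosparse}; all the resulting integers lie below a common bound $N$ and are distinct, which already yields $\#E((a_i)_{i\in\N})_N\geq N^\alpha$ for every $\alpha<\log_m\beta$.
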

\begin{tr}
	It is not possible to say anything general about whether or not the supremum is attained; there exist examples for both cases, see Figure \ref{Fsupatt}.
\end{tr}

From Theorem \ref{Tranksparsedummy} one can immediately conclude that $r$ has to be an integer. By a result of Lind \cite{lind1983entropies}, we can also say something about the kind of number that $\beta$ can be. 
\begin{txint}\label{Tperrondummy}
	Let $\beta$ be as defined in Theorem \ref{Tnonsparsedummy}. Then $\beta$ is equal to an integer root of a Perron number. That is, it is an integer root of a real algebraic integer $\rho > 1$, such that all the algebraic conjugates of $\rho$ have absolute value smaller than $|\rho|$. 
\end{txint}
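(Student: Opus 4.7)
The idea is to reduce the statement to the Perron--Frobenius theorem for irreducible non-negative integer matrices. By Theorem \ref{Tnonsparsedummy}, $\beta$ equals $\rho_{v}$ for some tied vertex $v \in V'$ attaining the maximum, so by definition $\beta$ is the spectral radius of the adjacency matrix $A$ of the strongly connected sub-digraph of the automaton at $v$. Because that sub-digraph is strongly connected, $A$ is irreducible with non-negative integer entries. In particular $\beta$, being an eigenvalue of an integer matrix, is automatically an algebraic integer, which is half of what the theorem demands; and since $(a_{i})_{i\in\N}$ is non-sparse, one also has $\beta>1$.

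I would then invoke the classical Perron--Frobenius theorem to record two facts about $A$: the spectral radius $\beta$ is a simple real eigenvalue, and the full set of eigenvalues of $A$ of modulus $\beta$ is exactly $\{\beta \omega^{k} : k=0,\ldots,p-1\}$, where $\omega = e^{2\pi i/p}$ and $p$ is the period of the strongly connected digraph (the gcd of the lengths of its cycles). Any Galois conjugate of $\beta$ is itself an eigenvalue of $A$, because $\beta$ and its conjugates share a common minimal polynomial over $\Q$, which divides the integer characteristic polynomial of $A$.

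The decisive step is to pass to the power $A^{p}$. After reordering the vertices according to the cyclic decomposition of the digraph, $A^{p}$ is block-diagonal with $p$ primitive irreducible non-negative integer blocks, each of spectral radius $\beta^{p}$. Applying Perron--Frobenius to one such block shows that every eigenvalue of that block other than $\beta^{p}$ has modulus strictly less than $\beta^{p}$, and hence the same is true for $A^{p}$ as a whole. Every Galois conjugate of $\beta^{p}$ is an eigenvalue of $A^{p}$, so every such conjugate has absolute value strictly less than $\beta^{p}$: this is precisely the statement that $\beta^{p}$ is a Perron number. Since $\beta>0$ is the real positive $p$-th root of $\beta^{p}$, the theorem follows.

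The main obstacle is the Galois-theoretic bookkeeping in the periodic case: one must rule out conjugates of $\beta^{p}$ landing on the boundary circle $|z| = \beta^{p}$, and the passage from $A$ to $A^{p}$ is exactly what accomplishes this, since $A$ itself has $p$ distinct eigenvalues of modulus $\beta$ that could in principle all be conjugate to $\beta$ over $\Q$. Alternatively, one may cite the theorem of Lind \cite{lind1983entropies} verbatim, which characterizes the spectral radii of irreducible non-negative integer matrices as precisely the integer roots of Perron numbers, thereby completing the proof in a single stroke.
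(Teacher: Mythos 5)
Your proposal is correct and, in its final sentence, lands on exactly the paper's route: recognize $\beta$ as the spectral radius of the (irreducible, non-negative, integer) adjacency matrix of the strongly connected sub-digraph at a maximizing tied vertex, and then invoke Lind's characterization (Theorem \ref{Tlind}) of such spectral radii as integer roots of Perron numbers. The paper's Proposition \ref{Pintegral} does precisely this in one line, combining Theorems \ref{Tupperalpha} and \ref{Tlind}.

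What you add beyond the paper is a correct, self-contained proof of the only direction of Lind's theorem actually needed here, namely that the spectral radius of an irreducible non-negative integer matrix with $\rho>1$ is an integer root of a Perron number. Your handling of the imprimitive case is the right move: the conjugates of $\beta$ could a priori occupy all $p$ points $\beta\omega^k$ on the circle $|z|=\beta$, so passing to $A^p$ (block-diagonal with primitive blocks of spectral radius $\beta^p$) collapses them to a single dominant simple eigenvalue $\beta^p$ whose remaining conjugates, being eigenvalues of the integer matrix $A^p$, are forced strictly inside the circle of radius $\beta^p$. Together with $\beta>1$ (from $v$ being tied, via Theorem \ref{TBoundOmega}), this makes $\beta^p$ a Perron number. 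So your argument buys a proof that does not rely on Lind's theorem as a black box, at the cost of some Perron--Frobenius bookkeeping; the paper's version is shorter but less self-contained.
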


In Section 2 we will give all the basic notions and definitions. In Section 3 we will give the new proof of Cobham's Theorem and Theorem \ref{Tranksparsedummy}. In Section 4 we give a proof of Theorem \ref{Tnonsparsedummy} and Theorem \ref{Tperrondummy}.
\subsection*{Notation}~\\
By $\N$ we denote all positive integers and thus it does not contain $0$. 

By $[N]$ we denote all the natural number up to and including $N$, so $[N]:= \{1, 2, \ldots, N\}$. 

We recall that a walk $W$ in a graph is any sequence of edges $(e_1, \ldots, e_k)$ such that the end vertex of $e_i$ equals the start vertex of $e_{i+1}$ for all $1 \leq i \leq k-1$. In particular a walk can contain the same edge multiple times. If the start vertex of $e_1$ equals the end vertex of $e_k$ we call $W$ a closed walk. Note that a closed walk is not necessarily a cycle, which cannot contain the same vertex multiple times. 

If $W = (e_1, \ldots, e_k)$ is a walk in some graph, its length $l(W)$ is defined as the number $k$, that is the number of edges $W$ contains including possible repeated edges. 
\section{Prerequisites}\label{CHdef}
\subsection{Automatic Sequences} \label{Sdefaut} As mentioned before, to obtain automatic sequences we first need finite automata. 
\begin{td}
	Let $m$ be a natural number and $A$ a finite set containing $0$. An \textit{$m$-automaton with output in $A$} is a finite directed graph for which the following hold:
	\begin{itemize}
		\item each vertex is labeled by an element of $A$. 
		\item one vertex is additionally labeled with `Start'. 
		\item each vertex has $m$ outgoing edges, that are all labeled with a different element of $\{0, 1, \ldots, m-1\}$.
		\item an edge labeled with $0$ connects two vertices with the same label.  (This is called \textit{leading zero invariance}.)
		\item each vertex can be reached from the start vertex. (This is called \textit{accessibility}.)
	\end{itemize}
\end{td}

\begin{td}
	Let $k \in \Z$, $m$ a natural number and $(b_i)_{i\geq 0}$ a (finite) base-$m$ expansion of $k$, in symbols $k = \sum_{i=0}^nb_im^i$. Suppose we are given an $m$-automaton with states in some set $A$. From the `Start' vertex consecutively follow the edges labeled by $b_0, b_1, \ldots, b_n$ and suppose we end up at vertex $v$. Define $a_k$ to equal the label of $v$ in $A$. Then $(a_i)_{i\geq 0}$ is called an \textit{$m$-automatic sequence}. 
\end{td}

\begin{tr}
	The fourth property of an $m$-automaton implies that an automatic sequence does not depend on which base-$m$ expansion one takes for an integer. For instance writing $5 = 1\cdot1 + 0\cdot2 + 1\cdot4$ will give the same value for $a_5$ as writing $5 = 1\cdot1 + 0\cdot2 + 1\cdot4+0\cdot 8$. This is where the name leading zero invariance comes from. 
\end{tr}
\begin{tes}
	An important class of examples are those for which $m = p$ and $A = \F_p$ for some prime $p$. We consider the two $2$-automata with states in $\F_2$ in Figure \ref{FautoF2} and write $(a_i)_{i \geq 0}$ and $(b_i)_{i\geq 0}$ respectively for the automatic sequences generated by the automata in \ref{FautoF2.a} and \ref{FautoF2.b}.
	\begin{itemize}
		\item Suppose we are interested in calculating $a_{13}$ and $b_{13}$. In binary $13 = 1101$, so we must follow the arrows labeled by $1, 0, 1$ and $1$ consecutively. In (a) we end up in the vertex at the bottom labeled with $0$ and in (b) in the top right vertex labeled with $1$. Hence, $a_{13} = 0 $ and $b_{13} = 1$. 
		\item Using this method for all $i \in \Z_{\geq 0}$ we get $(a_i) = (0, 1, 1, 0, 1, 0, 0, 0, 1, \ldots)$ and \\$(b_i) = (0, 1, 0, 0, 1, 0, 0, 0, 0, \ldots)$. 
	\end{itemize}
\end{tes}
\begin{tr}
	The case where $m=p$ and $A = \F_p$ is also the relevant one for the Nottingham groups. The Nottingham group $\mathcal{N}_p$ consists of formal power series of the form $t + O(t^2)$ with coefficients in $\F_p$ and is a group under composition. Christol showed that an element is algebraic over $\F_p(t)$ if and only if its sequence of coefficients is automatic \cite{christol1979autoalg}. It is also known that every element of finite order is conjugate to an algebraic, and thus automatic, series \cite[Remark 1.6]{bleher2016automcurves}. An open question is if every element of finite order in $\mathcal N_p$ is conjugate to a series such that the sequence of coefficients is sparse \cite{byszewski2020automata}. Here we can also see that Cobham's Theorem becomes relevant, since it suffices to show every such element is conjugate to an automatic series of which the support grows less fast than fractionally in $N$. 
\end{tr}

\begin{figure}[h] 
	\centering
	\subfloat[\label{FautoF2.a}]{\centering
		\begin{tikzpicture}[scale=0.15]
			\tikzstyle{every node}+=[inner sep=0pt]
			\draw [black] (15.2,-10.9) circle (3);
			\draw (15.2,-10.9) node {$0$};
			\draw [black] (28.8,-10.9) circle (3);
			\draw (28.8,-10.9) node {$1$};
			\draw [black] (28.8,-23.9) circle (3);
			\draw (28.8,-23.9) node {$0$};
			\draw [black] (13.877,-8.22) arc (234:-54:2.25);
			\draw (15.2,-3.65) node [above] {$0$};
			\fill [black] (16.52,-8.22) -- (17.4,-7.87) -- (16.59,-7.28);
			\draw [black] (18.2,-10.9) -- (25.8,-10.9);
			\fill [black] (25.8,-10.9) -- (25,-10.4) -- (25,-11.4);
			\draw (22,-11.4) node [below] {$1$};
			\draw [black] (28.8,-13.9) -- (28.8,-20.9);
			\fill [black] (28.8,-20.9) -- (29.3,-20.1) -- (28.3,-20.1);
			\draw (28.3,-17.4) node [left] {$1$};
			\draw [black] (27.477,-8.22) arc (234:-54:2.25);
			\draw (28.8,-3.65) node [above] {$0$};
			\fill [black] (30.12,-8.22) -- (31,-7.87) -- (30.19,-7.28);
			\draw [black] (26.12,-25.223) arc (324:36:2.25);
			\draw (21.55,-23.9) node [left] {$0,1$};
			\fill [black] (26.12,-22.58) -- (25.77,-21.7) -- (25.18,-22.51);
			\draw [black] (9,-7.3) -- (12.61,-9.39);
			\draw (8.35,-6.06) node [left] {Start};
			\fill [black] (12.61,-9.39) -- (12.16,-8.56) -- (11.66,-9.42);
	\end{tikzpicture}}
	\qquad
	\subfloat[\label{FautoF2.b}]{\centering
		\begin{tikzpicture}[scale=0.15]
			\tikzstyle{every node}+=[inner sep=0pt]
			\draw [black] (15.2,-10.7) circle (3);
			\draw (15.2,-10.7) node {$0$};
			\draw [black] (29.4,-10.7) circle (3);
			\draw (29.4,-10.7) node {$1$};
			\draw [black] (43.6,-10.7) circle (3);
			\draw (43.6,-10.7) node {$1$};
			\draw [black] (15.2,-24.9) circle (3);
			\draw (15.2,-24.9) node {$0$};
			\draw [black] (29.4,-24.9) circle (3);
			\draw (29.4,-24.9) node {$0$};
			\draw [black] (43.6,-24.9) circle (3);
			\draw (43.6,-24.9) node {$0$};
			\draw [black] (9,-7.1) -- (12.61,-9.19);
			\draw (8.35,-5.86) node [left] {Start};
			\fill [black] (12.61,-9.19) -- (12.16,-8.36) -- (11.66,-9.22);
			\draw [black] (18.2,-10.7) -- (26.4,-10.7);
			\fill [black] (26.4,-10.7) -- (25.6,-10.2) -- (25.6,-11.2);
			\draw (22.3,-11.2) node [below] {$1$};
			\draw [black] (31.967,-9.164) arc (113.37894:66.62106:11.424);
			\fill [black] (41.03,-9.16) -- (40.5,-8.39) -- (40.1,-9.31);
			\draw (36.5,-7.73) node [above] {$0$};
			\draw [black] (41.042,-12.251) arc (-66.35568:-113.64432:11.326);
			\fill [black] (31.96,-12.25) -- (32.49,-13.03) -- (32.89,-12.11);
			\draw (36.5,-13.7) node [below] {$0$};
			\draw [black] (29.4,-13.7) -- (29.4,-21.9);
			\fill [black] (29.4,-21.9) -- (29.9,-21.1) -- (28.9,-21.1);
			\draw (28.9,-17.8) node [left] {$1$};
			\draw [black] (40.6,-24.9) -- (32.4,-24.9);
			\fill [black] (32.4,-24.9) -- (33.2,-25.4) -- (33.2,-24.4);
			\draw (36.5,-24.4) node [above] {$0$};
			\draw [black] (18.2,-24.9) -- (26.4,-24.9);
			\fill [black] (26.4,-24.9) -- (25.6,-24.4) -- (25.6,-25.4);
			\draw (22.3,-25.4) node [below] {$1$};
			\draw [black] (16.708,-13.284) arc (22.86781:-22.86781:11.621);
			\fill [black] (16.71,-22.32) -- (17.48,-21.77) -- (16.56,-21.38);
			\draw (18.12,-17.8) node [right] {$0$};
			\draw [black] (13.635,-22.351) arc (-156.09794:-203.90206:11.233);
			\fill [black] (13.63,-13.25) -- (12.85,-13.78) -- (13.77,-14.18);
			\draw (12.17,-17.8) node [left] {$0$};
			\draw [black] (30.723,-27.58) arc (54:-234:2.25);
			\draw (29.4,-32.15) node [below] {$0,1$};
			\fill [black] (28.08,-27.58) -- (27.2,-27.93) -- (28.01,-28.52);
			\draw [black] (45.108,-13.284) arc (22.86781:-22.86781:11.621);
			\fill [black] (45.11,-22.32) -- (45.88,-21.77) -- (44.96,-21.38);
			\draw (46.52,-17.8) node [right] {$1$};
			\draw [black] (42.2,-22.255) arc (-159.00947:-200.99053:12.437);
			\fill [black] (42.2,-13.34) -- (41.45,-13.91) -- (42.38,-14.27);
			\draw (40.87,-17.8) node [left] {$1$};
	\end{tikzpicture}}
	\caption{Automata for the sequences (a) $(a_i) = (0, 1, 1, \ldots)$ and (b) $(b_i) = (0, 1, 0, \ldots)$. Figure (b) is taken from \cite[~Figure 1]{byszewski2019autoseqgenpoly}.} \label{FautoF2}
\end{figure}

\begin{td}\label{DEFsize}
	Let $W$ be a walk in some $m$-automaton, such that the edges of $W$ are consecutively labeled by $b_1, b_2, \ldots, b_n$. Then we define the \textit{size $|W|$ of $W$} by $$b_1 + b_2m + b_3m^2 + \ldots + b_{n}m^{n-1}.$$
\end{td}

\begin{trs}~\begin{itemize}
		\item Note the distinction between the \emph{length} $l(W)$ and the \emph{size} $|W|$ of a walk $W$. For example, the walk $W$ with $v_0$ equal to the `Start'-vertex that we use to determine $a_k$ has size $|W| = k$ and length $l(W) = \lfloor \log_m(k) + 1 \rfloor$, assuming no leading zeroes.
		\item	Given a vertex $v$ in an $m$-automaton and an integer $k$ there is a unique walk of size $k$ that starts in $v$ and ends in a non-zero labeled edge. This follows from the fact that every integer $k$ has a unique base-$m$ expansion with a non-zero leading coefficient. We will use this in various parts of the proof. 
	\end{itemize}
\end{trs}

\begin{td}
	Let $(a_i)_{i\in \N}$ be a sequence in $A$, where $A$ is a set containing $0$, then its \textit{support} is defined to be the set 
	$$E((a_i)_{i\in \N}) = \{k \in \N \mid a_k \neq 0 \}.$$
	Furthermore, we write $E((a_i)_{i\in \N})_N$ for the set $E((a_i)_{i\in \N})\cap[N].$ 
\end{td}

\begin{td}
	A sequence $(a_i)_{i\in \N}$ is ($r$-)\textit{sparse} if 
	$$\# E((a_i)_{i\in \N})_N = O(\log(N)^r)$$
	for some $r \geq 0$. The infimum of such $r$ is called the \textit{rank of sparseness of $(a_i)_{i\in \N}$.} In the case that $(a_i)_{i\in \N}$ is automatic, the corresponding $m$-automaton is also called ($r$)-sparse. 
\end{td}

\subsection{Spectral graph theory}
The idea of spectral graph theory is to identify properties of the graph by examining its adjacency matrix. Here we recall some basic definitions and theory that we will need for Section \ref{SBoundsnumberwalks}. 
\begin{td}
	Let $D $ be a finite digraph (i.e. directed graph) with set of vertices $ \{v_1, \ldots, v_n\}$. Define $a_{i, j}$ as the number of edges going from $v_i$ to $v_j$. The $n \times n$ matrix $A_D := (a_{i, j})_{1 \leq i , j \leq n}$ is called the \textit{adjacency matrix of $D$}.
\end{td}
\begin{tl} \label{Ladjmat}
	Let $A_D$ be the adjacency matrix of some (di)graph $D = (V, E)$ and consider $B := A^k$ for some integer $k$. Then $b_{i, j}$ is equal to the number of walks from $v_i$ to $v_j$ of length $k$. 
\end{tl}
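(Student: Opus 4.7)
The plan is to prove Lemma \ref{Ladjmat} by induction on $k$, using the combinatorial interpretation of matrix multiplication.

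For the base case I would take $k = 1$, in which case $B = A_D$ and $b_{i,j} = a_{i,j}$ is by definition the number of edges from $v_i$ to $v_j$. Since each edge constitutes a walk of length $1$, the claim holds. (Alternatively, one could start at $k = 0$ with $B = I$, noting that the unique walk of length $0$ from $v_i$ to $v_j$ is the empty walk at $v_i$ when $i = j$ and that no such walk exists when $i \neq j$, matching the Kronecker delta.)

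For the inductive step, I would assume that for some $k \geq 1$ the entry $(A_D^k)_{i,\ell}$ counts the number of walks of length $k$ from $v_i$ to $v_\ell$, and then compute
\begin{equation*}
(A_D^{k+1})_{i,j} = \sum_{\ell=1}^{n} (A_D^k)_{i,\ell} \cdot a_{\ell,j}.
\end{equation*}
The key combinatorial observation is that every walk $(e_1, \ldots, e_{k+1})$ of length $k+1$ from $v_i$ to $v_j$ decomposes uniquely as a walk $(e_1, \ldots, e_k)$ of length $k$ from $v_i$ to some intermediate vertex $v_\ell$, followed by a single edge $e_{k+1}$ from $v_\ell$ to $v_j$. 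By the inductive hypothesis there are $(A_D^k)_{i,\ell}$ choices for the initial walk, and by definition there are $a_{\ell,j}$ choices for the final edge, so summing over $\ell$ gives exactly the total count of walks of length $k+1$ from $v_i$ to $v_j$.

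There is no real obstacle here; the only subtle point is remembering that walks in the sense of the paper's notation may repeat vertices and edges, so that the decomposition at the penultimate vertex is genuinely bijective and no overcounting occurs. Once this is observed, the induction closes immediately.
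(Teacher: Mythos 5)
Your induction is the standard and correct argument: the base case is right, and the decomposition of a length-$(k+1)$ walk at its penultimate vertex is indeed a bijection onto pairs (length-$k$ walk, final edge), which is exactly what the matrix product computes. The paper states this lemma without proof as a recalled basic fact, so your write-up supplies precisely the expected argument.
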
 

In the following we will assume all matrices to be taken over $\R$. 

\begin{td}
	Let $A$ be a matrix. We say $A$ is \textit{positive} if all entries are positive and that $A$ is \textit{non-negative} if all entries are non-negative. The definition for vectors is analogous. 
\end{td}

\begin{td}
	Let $A$ be a square matrix. The set of distinct eigenvalues of $A$ is called the \textit{spectrum of} $A$ and denoted by $\Spec(A)$. The number
	$$\rho(A) := \max_{\lambda \in \Spec(A)}|\lambda|$$ 
	is the \textit{spectral radius of} $A$. Here $|.|$ is the usual absolute value on $\CC$. 
\end{td}

\begin{td}\label{Dreducible}
	Let $A$ be a square matrix. We say $A$ is a \textit{reducible matrix} if there exists a permutation matrix $P$ so that
	$$PAP^{-1} = \begin{pmatrix}
		X & Y \\
		0 & Z
	\end{pmatrix},$$
	where $X, Y, Z$ and $0$ are non-trivial matrices and $X$ and $Z$ are square. \\ 
	If $A$ does not meet this condition, it is called an \textit{irreducible matrix}.
\end{td}

\begin{td}
	A digraph $D = (V, E)$ is called \textit{strongly connected} if for each pair of vertices $u, v$ there exits a walk from $u$ to $v$. It is called \textit{weakly connected} if there exists a walk from $u$ to $v$ or from $v$ to $u$.
\end{td}

\begin{tl} \label{Lstrconnirr}
	A digraph $D$ is strongly connected if and only of its adjacency matrix $A_D$ is irreducible.
\end{tl}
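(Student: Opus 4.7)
The plan is to prove both directions of the equivalence by contrapositive, translating the block-triangular form of Definition \ref{Dreducible} into a structural statement about edges of $D$ and vice versa.

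For the forward direction ($D$ strongly connected $\Rightarrow$ $A_D$ irreducible), I would assume $A_D$ is reducible, so that some permutation matrix $P$ gives $PA_DP^{-1} = \begin{pmatrix} X & Y \\ 0 & Z \end{pmatrix}$ with $X$ and $Z$ non-trivial square blocks. Since conjugation by $P$ is nothing but a relabelling of vertices, this block form realises a partition $V = I \sqcup J$, both parts non-empty, such that $(A_D)_{ij} = 0$ whenever $i \in J$ and $j \in I$; equivalently, no edge of $D$ leaves a vertex of $J$ to land in a vertex of $I$. A routine induction on the length then shows that every walk starting at any $u \in J$ remains entirely in $J$, since each successive edge must originate in a vertex of $J$ and such edges never reach $I$. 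Choosing any $u \in J$ and $v \in I$ therefore gives a pair with no walk $u \to v$, contradicting strong connectivity.

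For the reverse direction ($A_D$ irreducible $\Rightarrow$ $D$ strongly connected), I would assume $D$ is not strongly connected and pick $u, v \in V$ with no walk from $u$ to $v$. Let $I := \{w \in V \mid \text{there is a walk from } u \text{ to } w\} \cup \{u\}$ and $J := V \setminus I$. Both sets are non-empty, since $u \in I$ and $v \in J$. If $i \in I$ and $i \to j$ is an edge of $D$, then concatenating a walk from $u$ to $i$ with this edge produces a walk from $u$ to $j$, forcing $j \in I$; hence no edge of $D$ goes from $I$ to $J$, so the block of $A_D$ with rows in $I$ and columns in $J$ vanishes. Ordering the vertices so that $J$ precedes $I$ and conjugating $A_D$ by the associated permutation matrix then yields the block form $\begin{pmatrix} Z & Y \\ 0 & X \end{pmatrix}$ demanded by Definition \ref{Dreducible}, exhibiting $A_D$ as reducible.

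The only real subtlety I expect is bookkeeping the row/column convention correctly: because $(A_D)_{ij}$ counts edges from $v_i$ to $v_j$, a zero lower-left block encodes the absence of edges \emph{from} the bottom block of vertices \emph{to} the top block. This is why in the reverse direction the set $I$ of vertices reachable from $u$ must be placed in the second (bottom) block of the permuted matrix rather than the first, and analogously why in the forward direction it is the block $J$ (rather than $I$) that is forward-invariant for walks.
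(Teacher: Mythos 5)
Your proof is correct, and the bookkeeping of the row/column convention (edges \emph{from} row index \emph{to} column index, so the vanishing lower-left block forbids edges from the bottom block into the top block) is handled exactly right; the set of vertices reachable from $u$ must indeed sit in the second block in the reverse direction. For comparison: the paper does not prove Lemma \ref{Lstrconnirr} at all --- it is stated in the prerequisites as a standard fact (implicitly covered by the reference to \cite[Chapter~8]{meyer2000matrixanalysislinalg}) --- and your argument is the standard one, via the correspondence between a reducing permutation and a non-trivial partition $V = I \sqcup J$ with no edges from one part to the other, with the reachable set supplying that partition in the converse. The only stylistic remark is that in the reverse direction the clause ``$\cup\,\{u\}$'' is redundant if one admits the empty walk from $u$ to itself, which the paper's conventions do; this does not affect correctness.
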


We will use Perron-Frobenius theory, that deals with the spectra of matrices that are either positive or non-negative and irreducible. The results are quite numerous and can be found for instance in \cite[Chapter~8]{meyer2000matrixanalysislinalg}. The following proposition assembles the parts needed in this paper. 

\begin{tp}\label{PPerronFrobenius}
	Let $A$ be any non-negative irreducible matrix. Then
	\begin{enumerate}[(i)]
		\item the spectral radius $\rho(A)$ of $A$ is positive and contained in $\Spec(A)$. Also, it is a simple eigenvalue, that is it has multiplicity one.
		\item there exists a positive right-eigenvector $\vec{x}$ corresponding to the eigenvalue $\rho(A)$. 
	\end{enumerate}
\end{tp}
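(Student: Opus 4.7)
The plan is to give a proof along classical Perron--Frobenius lines, exploiting the graph-theoretic interpretation of irreducibility already set up in Lemma~\ref{Lstrconnirr}. The key preliminary fact is that if $A$ is non-negative and irreducible with $n$ rows, then $(I+A)^{n-1}$ is strictly positive. This is immediate from the graph picture: by Lemma~\ref{Ladjmat} the $(i,j)$-entry of $A^k$ counts walks of length $k$ from $v_i$ to $v_j$, and in the strongly connected digraph $D$ corresponding to $A$ every ordered pair of vertices is joined by some walk of length at most $n-1$, so every entry of $(I+A)^{n-1} = \sum_{k=0}^{n-1}\binom{n-1}{k}A^k$ is strictly positive.

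Next I would introduce the Collatz--Wielandt functional $r(\vec{x}) := \min_{i:\, x_i > 0}(A\vec{x})_i/x_i$, defined on the non-zero non-negative vectors, and set $r^* := \sup r(\vec{x})$, which by homogeneity may be taken over the standard simplex. A compactness argument, combined with the observation that replacing $\vec{x}$ by $(I+A)^{n-1}\vec{x}$ lands in the strictly positive interior without decreasing $r$, shows that the supremum is attained at some $\vec{x}^* > 0$. If $A\vec{x}^* \neq r^*\vec{x}^*$, then $\vec{y} := A\vec{x}^* - r^*\vec{x}^*$ is non-negative and non-zero, so $(I+A)^{n-1}\vec{y}$ is strictly positive; applied to $\vec{z} := (I+A)^{n-1}\vec{x}^*$ this forces $r(\vec{z}) > r^*$, contradicting maximality. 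Hence $\vec{x}^* > 0$ is a genuine eigenvector of $A$ with eigenvalue $r^* > 0$, which already delivers (ii). To identify $r^*$ with $\rho(A)$, take any (possibly complex) eigenpair $A\vec{y} = \lambda\vec{y}$; applying the triangle inequality componentwise gives $|\lambda|\,|\vec{y}| \leq A|\vec{y}|$, where $|\vec{y}|$ denotes the entrywise modulus, so $|\lambda| \leq r(|\vec{y}|) \leq r^*$. Since $r^* \in \Spec(A)$ this yields $\rho(A) = r^*$, and the non-simplicity-related parts of (i) follow.

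The main obstacle, as always in Perron--Frobenius, is showing that the eigenvalue $\rho(A)$ is simple. Geometric simplicity follows from another use of the positivity lemma: if $\vec{y}$ is any other real eigenvector at $\rho(A)$, then for $c := \min_i y_i/x^*_i$ the vector $\vec{y} - c\vec{x}^*$ is a non-negative eigenvector with at least one vanishing coordinate, and applying $(I+A)^{n-1}$ would make it either strictly positive or zero; only zero is consistent, so $\vec{y}$ is a scalar multiple of $\vec{x}^*$, and complex eigenvectors are handled by splitting into real and imaginary parts. For algebraic simplicity I would run the existence half of the argument also for $A^T$, whose digraph is the arrow-reversal of $D$ and therefore still strongly connected (so $A^T$ is irreducible by Lemma~\ref{Lstrconnirr}), to obtain a strictly positive left-eigenvector $\vec{w}^*$ of $A$ at $\rho(A)$; then if a generalised eigenvector $\vec{z}$ satisfied $(A - \rho(A)I)\vec{z} = \vec{x}^*$, pairing with $\vec{w}^*$ would give $0 = \vec{w}^{*\,T}(A-\rho(A)I)\vec{z} = \vec{w}^{*\,T}\vec{x}^* > 0$, a contradiction. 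This rules out Jordan blocks at $\rho(A)$ and finishes the proof.
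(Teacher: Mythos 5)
Your argument is correct, but the comparison here is a little lopsided: the paper does not prove Proposition~\ref{PPerronFrobenius} at all, it simply cites Meyer's textbook, so what you have written is essentially a self-contained reconstruction of the classical Collatz--Wielandt proof that the citation points to. All the key steps check out: the strict positivity of $(I+A)^{n-1}$ does follow from strong connectivity via Lemma~\ref{Ladjmat} (the $k=0$ term handling the diagonal); the reduction of the supremum of $r$ to the compact set of normalised vectors $(I+A)^{n-1}\vec{x}$ correctly sidesteps the failure of continuity of $r$ on the boundary of the simplex; the contradiction $r(\vec{z})>r^*$ when $A\vec{x}^*\neq r^*\vec{x}^*$ is valid because $\vec{z}>0$ and $A\vec{z}-r^*\vec{z}>0$ entrywise; the identification $r^*=\rho(A)$ via $|\lambda|\,|\vec{y}|\le A|\vec{y}|$ is standard; and the passage from geometric to algebraic simplicity via a positive left-eigenvector of the (still irreducible) transpose correctly kills any Jordan block of size $\ge 2$. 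What your route buys is that the proposition becomes internal to the paper and reuses its own graph-theoretic vocabulary (Lemmas~\ref{Ladjmat} and~\ref{Lstrconnirr}) rather than importing a black box. Two small points worth making explicit in a written-up version: first, $r^*>0$ needs the observation that every row of an irreducible matrix of size at least $2$ has a positive entry (take $\vec{x}=(1,\dots,1)$), which silently excludes the degenerate $1\times1$ zero matrix for which the statement is false as literally phrased; second, in the geometric-simplicity step you should normalise $\vec{y}$ so that $c=\min_i y_i/x^*_i$ is finite and the difference genuinely has a vanishing coordinate, which is automatic since $\vec{x}^*>0$. Neither affects the applications in the paper, where the matrices are adjacency matrices of strongly connected subdigraphs at tied vertices and hence contain a cycle.
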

\begin{proof}
	See \cite[Perron--Frobenius Theorem p.~673]{meyer2000matrixanalysislinalg}.
\end{proof}

\section{Bounds for the growth of automatic sequences} \label{CHautosparse}
\subsection{Identifying sparseness by the automaton}
We will prove that we can see whether an automatic sequence is sparse by looking solely at its automaton. Namely, the series will be sparse if and only if the automaton contains no tied vertices. 
\begin{td}
	Let $v$ be a vertex of some $m$-automaton. Then $v$ is called \textit{tied} if the following two properties hold:
	\begin{itemize}
			\item[(i)] there exists a (possibly empty) walk from $v$ to a vertex with a label other than $0$. 
			\item[(ii)] there exist two different walks of the same length from $v$ to itself. 
		\end{itemize}
	See Figure \ref{Ftiedvertex} for a pictorial representation. 
\end{td}
\begin{tr}
As we will see in the next two lemmas, the second condition can also be replaced by the following equivalent statement:
\begin{itemize}[(ii)']
	\item there exist two different walks from $v$ to $v$ that do not contain $v$ a third time.
\end{itemize}
\end{tr}
\begin{tl} \label{Lwalksize}
	Let $W_1$ and $W_2$ be two walks in an $m$-automaton of length $n$ and $k$ respectively. Suppose that the last vertex of $W_1$ equals the first vertex of $W_2$. Then these walks can be composed to a walk with size $$|W_1 \circ W_2| = |W_1| + m^{n}|W_2|.$$
\end{tl}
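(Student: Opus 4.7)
The plan is to unwind Definition \ref{DEFsize} and reduce to a one-line index computation; there is no real obstacle beyond bookkeeping. First I would observe that the hypothesis that the last vertex of $W_1$ equals the first vertex of $W_2$ is precisely what is needed to guarantee that the concatenation $W_1 \circ W_2$ is a well-defined walk in the $m$-automaton, whose edge sequence is the sequence of edges of $W_1$ followed by the sequence of edges of $W_2$. In particular, the length of $W_1 \circ W_2$ is $n + k$.

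Next I would fix notation: write the labels of the consecutive edges of $W_1$ as $b_1, b_2, \ldots, b_n$ and those of $W_2$ as $c_1, c_2, \ldots, c_k$, all lying in $\{0, 1, \ldots, m-1\}$. Then the labels of $W_1 \circ W_2$ are, in order, $b_1, \ldots, b_n, c_1, \ldots, c_k$, and a direct application of Definition \ref{DEFsize} gives
$$|W_1 \circ W_2| \;=\; \sum_{i=1}^{n} b_i\, m^{i-1} \;+\; \sum_{j=1}^{k} c_j\, m^{(n+j)-1}.$$
The first sum is $|W_1|$ by definition, while factoring $m^n$ out of the second sum yields $m^n \sum_{j=1}^{k} c_j\, m^{j-1} = m^n |W_2|$. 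Combining these two observations produces the desired identity $|W_1 \circ W_2| = |W_1| + m^n |W_2|$, completing the proof.
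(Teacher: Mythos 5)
Your proof is correct and is essentially identical to the paper's: both unwind Definition \ref{DEFsize}, label the edges of $W_1$ and $W_2$ as $b_1,\ldots,b_n$ and $c_1,\ldots,c_k$, concatenate, and factor $m^n$ out of the second block of terms. Nothing to add.
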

\begin{proof}
	That the walks can be composed is clear. Now suppose $W_1$ has edges labeled by $b_1, \ldots, b_n$ and $W_2$ has edges labeled by $c_1, \ldots, c_k$. Then $W_1 \circ W_2$ has edges labeled by $b_1, \ldots, b_n, c_1, \ldots, c_k$ and thus its size becomes \[b_1 + b_2m + \ldots b_nm^{n-1} + c_1m^n + \ldots + c_{k}m^{n+k-1} = |W_1| + m^{n}|W_2|. \qedhere \]
\end{proof}
\begin{tl}\label{Lvcyc}
	Let $v$ be a vertex from which there exists a walk to a non-zero vertex. Then $v$ is not tied if and only if there is at most one walk from $v$ to $v$ that does not contain $v$ a third time.
\end{tl}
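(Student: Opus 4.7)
My plan is to prove the equivalence by reducing it to a statement about \emph{simple loops at $v$}, i.e., walks from $v$ to $v$ whose interior avoids $v$. The key observation I would use is that every non-empty walk $W$ from $v$ to $v$ decomposes uniquely as a concatenation $W = C_1 \circ C_2 \circ \cdots \circ C_s$, where each $C_i$ is a simple loop at $v$---obtained by cutting $W$ at every intermediate occurrence of $v$ in its vertex sequence. Since hypothesis~(i) of the tied condition is assumed, the lemma reduces to showing that condition~(ii) holds if and only if $v$ admits at least two distinct simple loops.

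For the implication ``two distinct simple loops $\Rightarrow$~(ii)'', I would pick distinct simple loops $L_1$ and $L_2$ of lengths $l_1$ and $l_2$ and consider the walks $L_1^{l_2}$ and $L_2^{l_1}$: both have length $l_1 l_2$, and they differ as edge sequences because $L_1 \neq L_2$ forces a disagreement within the first $\min(l_1,l_2)$ edges that is carried over to the two concatenations. Thus condition~(ii) is satisfied. For the converse, I would assume there is at most one simple loop at $v$. If there is none, the decomposition shows no non-empty walk returns to $v$, so~(ii) fails trivially. If there is exactly one, say $L$ of length $l$, then every walk from $v$ to $v$ is a power $L^k$ of length $kl$, and no two distinct powers have equal length; so~(ii) fails in this case too.

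I do not expect any serious obstacle. The only subtlety is to confirm that $L_1^{l_2}$ and $L_2^{l_1}$ are genuinely different \emph{walks} (ordered edge sequences), not merely rearrangements of the same edges, but this is immediate from the definition of a walk once one notices that the initial disagreement between $L_1$ and $L_2$ propagates into the concatenation. The decomposition itself is forced by taking, at each stage, the shortest prefix of the remaining walk that returns to $v$, so it requires no further justification.
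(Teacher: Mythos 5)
Your argument is correct and essentially the same as the paper's: both reduce the lemma to the decomposition of closed walks at $v$ into simple loops, handle the ``at most one loop'' direction by noting every closed walk is then a power of the unique loop, and in the other direction manufacture two distinct closed walks of equal length from two distinct simple loops (the paper uses $W_1\circ W_2$ versus $W_2\circ W_1$ of length $l_1+l_2$, you use $L_1^{l_2}$ versus $L_2^{l_1}$ of length $l_1l_2$). One small point: the disagreement within the first $\min(l_1,l_2)$ edges is not forced by $L_1\neq L_2$ alone --- if $l_1<l_2$ and $L_2$ began with the edges of $L_1$, then $L_2$ would revisit $v$ at step $l_1$ --- so you must invoke the simplicity of the longer loop here, just as the paper does when it argues that $W_2\circ W_1$ has no occurrence of $v$ at position $x_1$.
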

\begin{proof}
	We first prove the right to left implication. Write $W$ for the (possible) walk from $v$ to $v$ that does not contain another $v$. Then, any walk from $v$ to $v$ has to equal $nW$ (composing $W$ with itself $n$ times) and thus there cannot exist two distinct walks from $v$ to $v$ of the same length, implying $v$ is not tied. The left to right implication we will prove by contradiction. So, suppose there are two distinct walks $W_1$ and $W_2$ starting and ending in $v$ that do not contain $v$ a third time. Because $v$ is not tied, these two walks cannot have the same length. Define the length of $W_i$ to be $x_i$ and assume without loss of generality that $x_1 < x_2$. Since $W_1$ and $W_2$ start and end in $v$, they can be composed. Consider the walks $W_1\circ W_2$ and $W_2 \circ W_1$. The length of both these walks is $x_1 + x_2$ and they also still start and end in $v$. Furthermore, they are not the same walk, since the first walk will have a vertex $v$ at spot $x_1$ and the second walk will not, because $W_2$ would then contain a third $v$. We have constructed two different walks from $v$ to $v$ of the same length. This contradicts the fact that $v$ is not tied and we may conclude that no such $W_1$ and $W_2$ can exist, which proves the lemma. 
\end{proof}

\begin{tx}\label{Tautosparse}
	Let $(a_i)_{i\in \N}$ be an automatic series in $A$, then one of the following must hold:
	\begin{itemize}
		\item the $m$-automaton contains a tied vertex and there is an $\alpha \in (0, 1]$ such that $\#E((a_i)_{i\in \N})_N \geq N^\alpha$ for sufficiently large $N$.  
		\item the $m$-automaton of $(a_i)_{i\in \N}$ contains no tied vertices and  $(a_i)_{i\in \N}$ is sparse. 
	\end{itemize}
\end{tx}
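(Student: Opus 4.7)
The plan is to split on whether the automaton has a tied vertex and handle the exponential lower bound and polylogarithmic upper bound separately. In the tied case, with $v$ the tied vertex, the strategy is to produce exponentially many distinct walks of a fixed length ending at a fixed non-zero vertex. Fix a walk $P$ from the start vertex to $v$ (by accessibility), a walk $Q$ from $v$ to some non-zero labeled vertex $w$ (by condition (i) of tied), and two distinct walks $C_1, C_2 : v \to v$ of common length $c \geq 1$ (by condition (ii)). For each $k \geq 1$ and each $\varepsilon \in \{1,2\}^k$, the concatenation $W_\varepsilon := P \circ C_{\varepsilon_1} \circ \cdots \circ C_{\varepsilon_k} \circ Q$ is a walk from start to $w$ of fixed length $L_k := l(P) + kc + l(Q)$, and the $2^k$ walks $\{W_\varepsilon\}$ are pairwise distinct because $C_1 \neq C_2$. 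Walks of the same length from start are determined by their label sequence and hence by their size (Definition \ref{DEFsize}), so these walks yield $2^k$ distinct indices $n < m^{L_k}$ with $a_n \neq 0$. Taking $N = m^{L_k}$ gives $\# E_N \geq 2^k = N^{\log_m(2)/c}$ along this subsequence, and interpolating—by taking the largest $k$ with $m^{L_k} \leq N$—yields $\# E_N \geq N^\alpha$ for all sufficiently large $N$ and any fixed $\alpha \in (0, \log_m(2)/c)$.

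If the automaton has no tied vertex, the goal is to show that the number of walks of length $\ell$ from start to a non-zero vertex is $O(\ell^R)$ for some $R \geq 0$, which implies $\# E_N = O((\log N)^{R+1})$ via the size-length correspondence. Call a vertex \emph{live} if some walk from it reaches a non-zero vertex; only live vertices can appear on walks contributing to $E$. By Lemma \ref{Lvcyc}, each live vertex $v$ admits at most one walk $W_v : v \to v$ not revisiting $v$. A first-passage extraction argument (sketched below) then pins down the strongly connected component of $v$ within the live subgraph: it must be either $\{v\}$ (possibly with a self-loop) or exactly the vertex set of $W_v$, with $W_v$ itself a simple directed cycle. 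Passing to the condensation of the live subgraph—a DAG with finitely many directed paths—each walk from start to a non-zero vertex projects onto some path $S_1 \to \cdots \to S_k$; within each cyclic SCC $S_i$ of cycle length $c_i$, the entry and exit vertices are forced by the adjacent edges, so the walk segment inside $S_i$ is determined by the number of full laps $j_i \geq 0$. The overall length constraint thus becomes $\sum_i c_i j_i = \ell - (\text{fixed overhead})$, which has $O(\ell^{k-1})$ non-negative solutions; summing over the finitely many condensation paths yields the required polynomial bound.

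The main obstacle is the structural claim underlying the no-tied case: in an automaton without tied vertices, the SCC of every live vertex is either a point or a simple directed cycle. The proof applies Lemma \ref{Lvcyc} twice. First, for any $u \neq v$ in the SCC of $v$, one truncates a walk $v \to u$ at the last visit to $v$ and a walk $u \to v$ at the first visit to $v$, obtaining walks that avoid $v$ in the middle; concatenating produces a walk $v \to v$ with no third $v$, which must equal the unique $W_v$, so $u$ lies on $W_v$. Second, if $W_v$ visited some interior vertex twice, one could split off the interior subloop to exhibit a second, distinct walk $v \to v$ with no third $v$, contradicting uniqueness; so $W_v$ is simple. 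Once this is in place, the counting in the no-tied case reduces to bounding the number of representations of an integer by a positive linear form—standard—and the tied case is essentially combinatorial bookkeeping.
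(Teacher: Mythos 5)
Your argument for the tied-vertex lower bound is essentially the same as the paper's: fix a walk $P$ from `Start' to the tied vertex $v$, two distinct equal-length closed walks $C_1, C_2$ at $v$, and a walk $Q$ from $v$ to a non-zero vertex; concatenating $P$ with $k$ choices of $C_{\varepsilon_i}$ and then $Q$ gives $2^k$ walks of a common length $L_k$, which by determinism have distinct label strings and hence distinct sizes, yielding $\# E_N \gg N^\alpha$ for any $\alpha < \log_m(2)/c$ after interpolating on $N$. (Two small points: one of your $2^k$ indices can be $0 \notin \N$, so the count is $2^k - 1$ as in the paper; and your displayed ``$2^k = N^{\log_m(2)/c}$'' is off by a bounded multiplicative constant, though this is absorbed by taking $\alpha$ strictly smaller.)

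For the non-tied case you take a genuinely different route. The paper transforms the automaton into a \emph{cycle arboressence} by duplicating vertices so that each simple cycle has a unique external predecessor, then bounds walks of a fixed length $k+1$ (using leading zeros to represent all $i < m^{k+1}$) by induction on the number of cycles traversed. You instead stay in the original automaton, restrict to the ``live'' subgraph, observe via Lemma~\ref{Lvcyc} that every strongly connected component of a live vertex is a point or a simple directed cycle (your two-step argument for this is correct, and chords are also excluded because they would give a second short closed walk), and then count walks via the condensation DAG: a walk projects to a condensation path, inter-SCC edges fix entry/exit vertices, and the intra-cycle segments are parametrized by lap counts subject to a linear length constraint with $O(\ell^{k'-1})$ solutions. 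Both approaches pivot on the same structural insight, but your condensation argument avoids the copying/unfolding step, which makes it a bit cleaner for the dichotomy alone. What it gives up is sharpness and reusability: because you sum over all lengths $\ell \le \log_m N + 1$ instead of counting length-exactly-$\ell$ walks with leading zeros, you get $\# E_N = O((\log N)^{r})$ rather than the paper's $O((\log N)^{r-1})$, and you do not produce the explicit cycle-arboressence normal form that the paper reuses to prove Theorem~\ref{Trank}. For Theorem~\ref{Tautosparse} itself, your argument is correct.
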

\begin{proof}
	First we will prove that if the $m$-automaton contains a tied vertex then we can find $\alpha > 0$ such that $\#E((a_i)_{i\in \N})_N \geq N^\alpha$ for sufficiently large $N$. Secondly we will show that if the $m$-automaton does not contain any tied vertices then $\#E((a_i)_{i\in \N})_N = O(\log(N)^r)$ for some $r \geq 0$, so that $(a_i)_{i\in \N}$ will be sparse. 
	
	Let $v$ be a tied vertex in an $m$-automaton that results in the automatic sequence $(a_i)_{i \in \N}$. By connectivity we know that there must be some walk $W_s$ from `Start' to $v$. Because $v$ is tied we also have distinct walks $W_1$ and $W_2$ going from $v$ to $v$ with equal length. Finally, we know that there is a walk $W_e$ from $v$ to a non-zero labeled vertex. (See Figure \ref{Ftiedvertex} for a pictorial representation.) We claim that we can assume that $W_e$ contains at least one edge and that its last edge is labeled non-zero. When $v$ is labeled by a $0$ this follows directly from the leading zero invariance and else, one may take $W_e$ equal to either $W_1$ or $W_2$ with all zero edges at the end removed, where we use that at least one of them does not consist solely of zero-edges. We write $z, x$ and $y$ for the lengths of $W_s, W_1$ and $W_e$ respectively. 
	\begin{figure}[h]
		\centering
		\includegraphics[width=5cm]{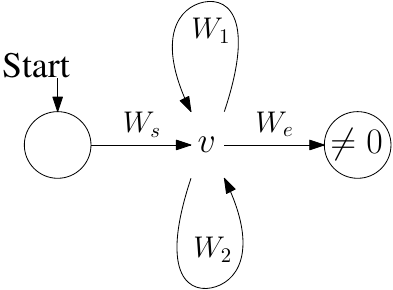}
		\caption{A tied vertex and its walks.} \label{Ftiedvertex}
	\end{figure}
	
	Now we can find a lower bound on the size of $E((a_i)_{i\in \N})_N$ using the tied vertex $v$. Namely, for each $k \in \Z_{\geq 0}$ the set \[\{|W_s \circ W_{i_1} \circ W_{i_2} \circ \ldots \circ W_{i_k} \circ W_e| : i_j \in \{1, 2\} \textup{ for } 1 \leq j \leq k \}\]
	is a subset of $E((a_i)_{i\in \N})$. Note here that we used that $W_s$ starts at `Start' and ends in $v$, that $W_1$ and $W_2$ both start at and end in $v$, and that $W_e$ starts at $v$ and ends in a non-zero vertex. Also notice that all choices of $k$ and $i_j$ give walks of different sizes, because the walks are different and do not end with a zero-edge. Hence, by Lemma \ref{Lwalksize}, for each $k$ we find in this way $2^k$ unique elements in $E((a_i)_{i\in \N})$ that all lie between $m^{z+y+(k-1)x}$ and $m^{z+y+kx}$. Even better, we find at least $2^k - 1$ elements in $E((a_i)_{i\in \N})_N$ for $N = m^{z+y+(k-1)x}$.
	
	Define $B := \frac{1}{x}\log_m(2)$, let $0<\alpha<B$ and $N$ a sufficiently large integer. Pick $k$ such that $N$ lies between $m^{z+y+(k-1)x}$ and $m^{z+y+kx}$. Using our previous observations we find that
	$$\#E((a_i)_{i\in \N})_N \geq 2^k - 1 = m^{xkB} -1 \geq (m^{z+y})^\alpha m^{xk\alpha} \geq (m^{z+y+xk})^\alpha \geq N^\alpha, $$
	where for the second inequality we use that $N$ (and therefore $k$) is sufficiently large so that $(B-\alpha)xk > (z+y)\alpha$. We have now shown that, if the $m$-automaton contains a tied vertex, then $\#E((a_i)_{i\in \N})_N \geq N^\alpha$ for some $\alpha$, proving the first part of the theorem. 
	
	To prove that $(a_i)_{i\in \N}$ being sparse implies the automaton has no tied vertices we will first show that we may assume that the $m$-automaton has a very specific structure, namely that of a cycle arboressence with as a root vertex the one labeled by `Start'. 

	\begin{td}\label{Dcycarbo}
		Let $D = (V, E)$ be a weakly connected finite digraph. We call $D$ a \textit{cycle arboressence} if there exists a $w \in V$ such that, if we add an extra edge $(\ast, w)$ to $E$ every vertex $v \in V$ is of one of the following types:
		\begin{enumerate}[(i)]
			\item all outgoing edges of $v$ are self-loops, and $v$ has exactly one other ingoing edge.
			\item any path starting at $v$ does not end at $v$, and $v$ has exactly one ingoing edge.
			\item there exists a unique path from $v$ to $v$, and on this path exactly one vertex has two ingoing edges, the others have one ingoing edge.
		\end{enumerate}
	 If a vertex falls in the category (.) it will be called a  \textit{type-(.) vertex} and additionally $w$ will be called a \textit{root vertex}. The \textit{height} of $D$ is defined as the maximum number of distinct cycles one can pass through consecutively.
	\end{td}
	\begin{trs}~
		\begin{itemize}
			\item The name cycle arboressence comes from the fact that such a digraph looks a lot like an arboressence or rooted out-tree, where the vertices are replaced by cycles (type-(iii) vertices) and the edges by actual arboressences (type-(ii) vertices), also see Figure \ref{Fcyclearbo}. The definition of height is also based on this idea.
			\item 	Note that the root vertex $w$ of a cycle arboressence is unique if it is a type-(i) or type-(ii) vertex, but if it is a type-(iii) vertex it can be any vertex on the path from $w$ to $w$. 
		\end{itemize}
	\end{trs}
	
	Let $v$ be any vertex of the automaton, then by Lemma \ref{Lvcyc} we also have three cases: 
	\begin{itemize}
		\item[(i)'] every walk from $v$ goes to a $0$ and $v$ is labeled by $0$. 
		\item[(ii)'] there is a walk from $v$ to a vertex labeled non-zero, but not a walk from $v$ to $v$.
		\item[(iii)'] there is a walk from $v$ to a vertex labeled non-zero and exactly one walk from $v$ to $v$ that does not contain $v$ a third time.
	\end{itemize}
 	For now a vertex falling in the category (.)' will be called a \textit{case-(.)' vertex}. 
 	
	We will rearrange our given automaton in such a manner that the types of the vertices in Definition \ref{Dcycarbo} correspond to the cases given above while still producing the same automatic sequence. In case-(i)' we can assume that all edges going outwards of $v$ go to $v$ itself. If, furthermore, $v$ has multiple ingoing edges $(., v)$, we may enlarge the $m$-automaton with another case-(i)' vertex $v'$, also having all its outgoing edges ending in $v'$, and replace one of the ingoing edges of $v$ by $(., v')$. This process can be repeated until $v$ has exactly one ingoing edge (or in the case of the `Start'-vertex, none), thus making it a type-(i) vertex as well. To make a case-(ii)' vertex into a type-(ii) vertex we will do something similar, but now instead of enlarging the automaton just by $v'$ we will enlarge it by a copy of everything that can be reached from $v$, including $v$ itself. The only exception is that one should never copy the `Start'--label, since it needs to stay unique. The case-(iii)' vertices are a bit more complicated, because we need to look at all the vertices on the walk from $v$ to $v$ at once. Since the automaton contains no tied vertices, we can assume that the walk from $v$ to $v$ is actually a path or cycle, and that all vertices on this cycle are also case-(iii)' vertices. Additionally, each vertex on the cycle has at least one incoming edge, coming from the cycle. Hence what we want to achieve is that the cycle as an object itself has exactly one incoming edge (or contains the `Start'-vertex). If this is not the case we can copy everything that can be reached from this cycle, including the cycle itself, and redirect one of the incoming edges to the corresponding vertex in the copy. This process can also be repeated until there is exactly one incoming edge in the cycle. Note that the number of copying steps that we have to do is finite, ensuring that the digraph we end up with is also still finite and thus an $m$-automaton. 
	
	
	\begin{figure}[h]
		\centering
		\includegraphics[width=9.5cm]{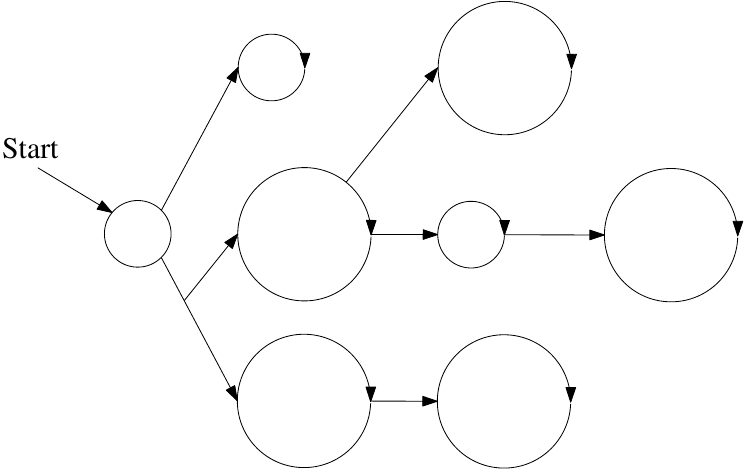}
		\caption{An example of an $m$-automaton without tied vertices viewed as cycle arboressence. The type-(i) vertices and their edges are not drawn.}\label{Fcyclearbo}
	\end{figure}
	
	Before we start with the computational part of the proof, we will introduce some important constants and labels based on the new structure of the $m$-automaton. We define $s$ as the number of cycles there are and label them $C_1, C_2, \ldots, C_s$. For each $C_i$ write $l_i$ for the number of vertices it contains and define $l := \min(l_i)$. Also, let $r$ be the height of the $m$-automaton viewed as cycle arboressence.
	
	Now we will examine how many integers there are in $E((a_i)_{i\in \N})_N$ with $N = m^{k+1}-1$ for some integer $k \geq 1$. Note that the integers $i < m^{k+1}$ are exactly the integers for which the value of $a_i$ can be determined by taking $k+1$ steps in the $m$-automaton. Write $k+1 = xl + y$ with $0 \leq y < l$. Since all the type-(i) vertices are labeled by $0$, it will suffice to give an upper bound on the number of different walks of length $k+1$ that end in a type-(ii) or type-(iii) vertex. \\
	Let $v$ be any type-(ii) or -(iii) vertex. There is a unique sequence of cycles, $C_{i_1}, C_{i_2}, \ldots, C_{i_j}$, with $j\leq r$, the walk has to pass through in order to end up at $v$. We will proceed with induction on $j$. If $j$ is either $0$ or $1$ (where the first can only happen if $v$ is a case-(ii) vertex), the walk to $v$ of length $k+1$ must be either unique or non-existent. Now suppose for some $j \geq 1$ we already know that there are $O(\log(N)^{j-2})$ walks to some vertex $u$ in the cycle $C_{i_{j-1}}$ (induction hypothesis). To extend such a walk to $v$ we can either take the shortest walk to $C_{i_j}$ and cycle in $C_{i_j}$ until our steps are gone, or we can cycle $1, 2, \ldots$ up to $x$ times in $C_{i_{j-1}}$ and then go to $C_{i_j}$. This gives  at most $(x+1)O(\log(N)^{j-2})$ ways to end up at $v$. We had $N = m^{lx + y}$, so $x+1 = O(\log(N))$ and, by induction, we get that there are $O(\log(N)^{j-1})$ walks of length $k+1$ to $v$. Since there are at most $r$ cycles we can pass through consecutively in our $m$-automaton we get $E((a_i)_{i\in \N})_N = O(\log(N)^{r-1})$ and $(a_i)_{i\in \N}$ is sparse.
\end{proof}

\begin{trs}~
	\begin{itemize}
		\item 	Theorem \ref{Tautosparse} can also be used to see if some element $(a_i)_{i\in \N}$ of $A^{\N}$ is an automatic sequence. Namely, if  $\# E((a_i)_{i\in \N})_N$ is not $O(\log(N)^r)$ for any $r$ and there also does not exist any $\alpha$ such that $\# E((a_i)_{i \in \N})_N \geq N^\alpha$ for sufficiently large $N$, then an $m$-automaton for $(a_i)_{i\in \N}$ both has to and cannot contain a tied vertex, so it does not exist. \\
		For an example of this, define $(a_i)_{i\in \N}$ by $a_0=0, a_1 = 1, a_2 = 0 , a_3=0$ and for $N>3$, $a_N = 0$ if and only if $\lceil \log(N)^{\log(\log(N))} \rceil = \lceil \log(N-1)^{\log(\log(N-1))}\rceil$. Then $E((a_i)_{i\in \N})_N$ grows as $\log(N)^{\log(\log(N))}$ which is neither $O(\log(N)^r)$ nor grows as fast as $N^\alpha$ for any $\alpha > 0$. So $(a_i)_{i\in \N}$ meets the requirements and cannot be automatic. 
	
	\item One might expect that in the sparse case $E((a_i)_{i\in \N})_N$ grows at least like $\log(N)^r$, because that is, up to a constant, the total number of walks from `Start' of length $k$. However, the last cycle of the cycle arboressence will in practice always consist of zero-edges and its outgoing edges will all go to type-(i) vertices. This can also be seen in Figure \ref{FautoF2.a}.
\end{itemize}
\end{trs}
\subsection{Determining the rank of sparseness}
The structure of a cycle arboressence does not only make it easy to determine a lower bound for the support growth, but also gives us a new way to determine exactly what the rank of the automatic series is. 
\begin{tx}\label{Trank}
	Let $(a_i)_{i\in \N}$ be a sparse automatic series in a set $A$ with cycle arboressence of height $r$. Then the rank of sparseness of $(a_i)_{i\in \N}$ is $r-1$. 
\end{tx}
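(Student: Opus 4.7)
The upper bound is already contained in the proof of Theorem~\ref{Tautosparse}, which establishes $\#E((a_i)_{i\in\N})_N = O(\log(N)^{r-1})$; this shows the rank of sparseness is at most $r-1$ and that this value is attained. What remains is the matching lower bound: I need to exhibit a constant $C>0$ and an infinite sequence of $N$ with $\#E((a_i)_{i\in\N})_N \geq C \log(N)^{r-1}$, which will force the rank to be at least $r-1$.

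The plan is to exploit the cycle arboressence representation of the automaton (which exists by the construction inside the proof of Theorem~\ref{Tautosparse}) and to pick a maximum-depth chain $C_1, C_2, \ldots, C_r$ of $r$ consecutive distinct cycles. Because every non-leaf vertex in the cycle arboressence can reach a non-zero-labeled vertex, and because by maximality of $r$ no further cycle can be entered from $C_r$, there is a non-zero vertex $v_e$ reachable from some vertex on $C_r$ without traversing any additional cycle. I fix once and for all a walk $W_s$ from Start to the entrance of $C_1$, transition walks $T_i$ from an exit of $C_i$ to the entrance of $C_{i+1}$ for $1 \leq i < r$, and a terminating walk $W_e$ from the exit of $C_r$ to $v_e$; let $l_i$ denote the length of $C_i$. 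For every tuple $(n_1,\ldots,n_r)\in\Z_{\geq 0}^r$ I form the walk that follows $W_s$, runs through $C_1$ with $n_1$ extra full laps before leaving via $T_1$, continues through $C_2$ with $n_2$ extra laps, and so on, terminating with $W_e$ at $v_e$. This walk has length $L_0 + \sum_{i=1}^{r} n_i l_i$ for a fixed constant $L_0$ depending only on the auxiliary walks and the unavoidable partial arcs needed to line up entries and exits within each cycle. Because the cycle-$i$ laps occupy distinguishable positions in the edge sequence, different tuples produce distinct walks, and by the uniqueness of a walk of given size from a fixed starting vertex these correspond to distinct integers in $E((a_i)_{i\in\N})$.

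Setting $N = m^{k+1}$ and focusing on walks of length exactly $k+1$, the construction produces one walk for each non-negative integer solution of $\sum_{i=1}^{r} n_i l_i = k+1 - L_0$. By the classical partition asymptotic, equivalently by reading off the coefficients of the generating function $\prod_{i=1}^{r}(1-x^{l_i})^{-1}$, the number of such solutions is $\Theta(k^{r-1}) = \Theta((\log N)^{r-1})$ provided that $k+1-L_0$ lies in the appropriate residue class modulo $\gcd(l_1,\ldots,l_r)$. Since such $k$ form an arithmetic progression, we obtain $\#E((a_i)_{i\in\N})_N \geq C(\log N)^{r-1}$ along an infinite sequence of $N$, which is enough to conclude that the rank of sparseness is at least $r-1$ and hence equal to $r-1$. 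The main delicate points I foresee are the existence of $v_e$ (tracing carefully through the cycle arboressence to ensure a non-zero vertex is reachable from $C_r$ within the subtree rooted at $C_r$, using that $C_r$ consists of case-(iii)$'$ vertices) and the distinctness of the constructed walks; both follow from the definitions but deserve explicit justification. The restriction to an arithmetic progression of lengths is harmless for bounding the infimum that defines the rank.
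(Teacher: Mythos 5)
Your lower-bound argument is correct, and it takes a genuinely different route from the one in the paper. The paper argues by induction on the depth $j$ of a vertex: it shows that for every $C, \varepsilon > 0$ and all sufficiently large $k$, the number of length-$k$ walks from Start to a depth-$j$ vertex exceeds $Ck^{j-1-\varepsilon}$, and then lets the $\varepsilon$-loss disappear into the infimum defining the rank. You instead fix a maximal chain $C_1,\ldots,C_r$ of cycles, parameterize the walks of length $k+1$ by the lap-tuple $(n_1,\ldots,n_r)$, and reduce the count to the number of non-negative solutions of $\sum n_i l_i = k+1 - L_0$, which is $\Theta(k^{r-1})$ along an arithmetic progression in $k$. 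This is more direct and actually yields the sharper bound $\#E((a_i))_N \geq C(\log N)^{r-1}$ on a subsequence, whereas the paper's induction only reaches $(\log N)^{r-1-\varepsilon}$; the trade-off is that you import the standard asymptotic for the coefficient of $\prod_i (1-x^{l_i})^{-1}$, while the paper's argument is self-contained. Two points are worth stating carefully when writing this up. First, the existence of $W_e$: the vertices of $C_r$ are type-(iii) vertices and hence case-(iii)$'$ vertices, so a non-zero vertex is reachable, and by maximality of $r$ that walk can be chosen not to enter another cycle. Second, the distinctness of integers: what you actually use is that two distinct walks of the \emph{same} length $k+1$ from Start carry distinct digit strings $(b_0,\ldots,b_k)$ and hence distinct sizes $\sum b_j m^j < m^{k+1}$; this holds even when the last edge is labeled $0$, so the remark in the paper about the unique walk of a given size ending in a non-zero edge is not quite the statement you need, though the substance of your claim is right.
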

\begin{proof}
	From the proof of Theorem \ref{Tautosparse} we know that $\#E((a_i)_{i\in \N})_N = O(\log(N)^{r-1})$, so the rank of sparseness of $(a_i)_{i\in \N}$ is at most $r-1$. Now we need to show that the rank cannot be smaller, which we will do by a similar induction argument and thus we will use the same notation. In particular, recall that $j$ is the number of cycles one has to partially pass through to reach $v$.
	
	The claim we are going to prove is that for all $C , \varepsilon > 0$ there exists an $M \in \N$ such that if $k > M$ is also an integer, the number of walks from `Start' to $v$ in $k$ steps is larger than $C\cdot k^{j-1-\varepsilon}$. The statement already holds for $j = 1$, since in that case the number of walks is always equal to either $0$ or $1$. Now assume $j-1 \geq 1$ and that for all vertices $u$ in the cycle $C_{i_{j-1}}$ the lower bound $C\cdot k^{j-2-\varepsilon}$ holds whenever $k> M$. Define $z$ as the minimal number of steps one has to take to go from any $u$ to $v$ and let $k' = xl + z + M$ with $x$ sufficiently large. The number of walks to $v$ is then at least
	$$C\cdot (l^{j-2-\varepsilon} + (2l)^{j-2-\varepsilon} + \ldots + (xl)^{j-2-\varepsilon}) \geq \frac{C}{l(j-1-\varepsilon)} (xl)^{j-1-\varepsilon} \geq \frac{C}{2l(j-1-\varepsilon)} (k')^{j-1-\varepsilon},$$
	where the second inequality uses that $x$ is sufficiently large. This proves that the statement also holds for a vertex $v$ in the $j$th cycle, though the integer $M$ might be larger than before. 
	
	To finish the proof we look at some non-zero vertex $v$ that one can only reach by (partially) going through $r$ cycles. This vertex exists because a type-(iii) vertex still has to be a case-(iii)' vertex. Fix $C, \varepsilon > 0$ and take $N = m^k$ with $k > M$, we find that $\# E((a_i)_{i\in \N})_N \geq C\log(N)^{r-1-\varepsilon}$.  Since for each choice of $C, \varepsilon$ we find arbitrarily large $N$ for which this holds, we can conclude that $E((a_i)_{i\in \N}) \neq O(\log(N)^{r-1-\varepsilon})$ for any $\varepsilon$ larger than $0$ and thus the rank of sparseness cannot be smaller than $r-1$. 
\end{proof}
\begin{tc}
	If $(a_i)_{i\in \N}$ is an automatic sparse sequence, then its rank of sparseness (defined by an infimum) is attained by an integer.
\end{tc}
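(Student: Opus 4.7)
The plan is to read the corollary off directly from Theorem \ref{Trank} together with the upper bound established inside the proof of Theorem \ref{Tautosparse}. Theorem \ref{Trank} asserts that the rank of sparseness of $(a_i)_{i\in\N}$ equals $r-1$, where $r$ is the height of the cycle arboressence into which the generating automaton was rewritten (via the copying procedure in the proof of Theorem \ref{Tautosparse}).

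First, I would observe that the height, as introduced in Definition \ref{Dcycarbo}, is by construction the maximum number of distinct cycles one can traverse consecutively in a finite digraph, and is therefore a non-negative integer. Consequently $r-1 \in \Z_{\geq -1}$, and in the non-degenerate case $r \geq 1$ so that $r-1$ is a non-negative integer.

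Second, to check that the infimum defining the rank of sparseness is actually attained at $r-1$, I would invoke the explicit bound $\#E((a_i)_{i\in\N})_N = O(\log(N)^{r-1})$ proved in the final step of Theorem \ref{Tautosparse}. This bound certifies that $r-1$ itself belongs to the set $\{r' \in \R_{\geq 0} \mid \#E((a_i)_{i\in\N})_N = O(\log(N)^{r'})\}$, so the infimum of that set is realized at the integer $r-1$.

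There is no substantial obstacle here: the corollary is essentially a bookkeeping statement packaging the quantitative content of Theorem \ref{Trank}. The only minor point to verify is that the rewriting procedure used in the proof of Theorem \ref{Tautosparse} always terminates in a \emph{finite} cycle arboressence, so that its height is a well-defined integer; but this was already noted in that proof, since only finitely many copying steps are needed.
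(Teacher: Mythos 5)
Your proposal matches the paper's proof: both read the integrality of the rank off Theorem \ref{Trank} (the rank equals the height $r$ of the cycle arboressence minus one) and certify attainment via the bound $\#E((a_i)_{i\in\N})_N = O(\log(N)^{r-1})$ established in the proof of Theorem \ref{Tautosparse}. This is exactly the argument given in the paper, so no further comparison is needed.
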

\begin{proof}
	From Theorem \ref{Trank} we know the rank is the height of a cycle arboressence minus $1$, which is an integer. In the proof of Theorem \ref{Tautosparse} we saw  that if $r$ is the height of the cycle arboressence, then $\#E((a_i)_{i\in \N})_N = O(\log(N)^{r-1})$. So indeed, the rank is attained.
\end{proof}
\begin{te}
	In Figure \ref{FautoF2.a} we saw an automaton for a sparse sequence. This automaton is already in the cycle arboressence form with two cycles (the self-loops at the top vertices). These cycles can be used consecutively, so the height is $2$ and we can conclude that the rank of sparseness is $1$. 
\end{te}
\begin{tr}
	The proof of Szilard, Yu and Zhang \cite{szilard1992regularlanguages} that shows that the rank of an automatic sequence is always an integer uses the notion of simple sparse sets. It states that a sequence has rank $r$ if its support can be written as a union of disjoint simple sparse sets of rank at most $r$. It is possible to go from the cycle arboressence to simple sparse sets and back again. For the first direction, let $a_i$ be the word of the walk between either `Start' and the first cycle $(i=0)$, the cycle $C_{i_j}$ and some non-zero vertex ($i=j$) or between consecutive cycles ($1\leq i \leq j-1$) and let $b_i$ be the word of a cycle. Then the support contains the simple sparse set $\{a_{j}b_{j}^{x_{j}}\cdots a_{1}b_1^{x_1}a_0 \mid x_1, \ldots, x_r \in \Z_{\geq 0} \}$. 
\end{tr}

\section{An upper bound for non-sparse sequences}
\subsection{Bounds on the number of walks}\label{SBoundsnumberwalks}
In this subsection we state the definitions and lemmas we will need to give an upper bound for $\alpha$ in the next subsection. The focus will lie on bounding the number of closed walks with specified length through a tied vertex. Intuitively it makes sense that $\alpha$ depends on this; if there are more closed walks of specified length we can construct more walks via the tied vertex to a non-zero vertex. 
\begin{td}
	Let $u$ and $v$ be vertices of a (di)graph and $k$ a non-negative integer. Define $\Omega_k(u, v)$ as the number of walks of length $k$, starting at $u$ and ending at $v$. When $u = v$ we also write $\Omega_k(v)$. Furthermore, the number of walks of length less or equal to $k$ is denoted by $\Omega_{\leq k}(u, v)$, that is $\Omega_{\leq k}(u, v) := \sum_{i=0}^k \Omega_i(u, v).$
\end{td}

In Lemma \ref{Ladjmat} we recalled that the number of walks of certain length in a (di)graph can be determined by its adjacency matrix. Namely, if $u$ and $v$ are two vertices of some (di)graph with adjacency matrix $A$, then the number of walks of length $k$ from $u$ to $v$ corresponds to $(A^k)_{u, v}$. Therefore, $\Omega_k(u, v) = (A^k)_{u, v}$. 

Now we want to use Perron-Frobenius theory, but this cannot be done immediately. This is because, given any $m$-automaton or digraph, the adjacency matrix is not necessarily irreducible. In fact, by Lemma \ref{Lstrconnirr}, the adjacency matrix is irreducible if and only if it corresponds to a strongly connected digraph. For each vertex $v$ of a certain $m$-automaton we will be able to look at a subdigraph that is strongly connected. The idea is based on the observation that any vertex on a walk from $v$ to $v$ must be strongly connected to $v$. Indeed, suppose $W$ is some walk from $v$ to $v$ and $u$ any vertex on $W$, then clearly there is a path going from $v$ to $u$ and a path from $u$ to $v$. Even better, if $w$ is yet another vertex strongly connected to $v$, we can also construct a path from $u$ to $w$ and $w$ to $u$ by going via $v$. What we see here is that all vertices involved in walks from $v$ to $v$ are strongly connected to each other. It now makes sense to look at the subdigraph that only consists of vertices that are strongly connected to $v$ and the edges between them. 
\begin{td}
	Let $D=(V, E)$ be a digraph and $v \in V$. Define 
	\begin{itemize}
		\item the set of vertices $V_v := \{u\in V \mid \textup{there are paths from } u \textup{ to } v \textup{ and from } v \textup{ to } u\}$,
		\item the set of edges $E_v := \{(u, w) \in E \mid u, w \in V_v\}$. 
	\end{itemize}
	Then $D_v := (V_v, E_v)$ is called the \textit{strongly connected subdigraph of $D$ at $v$}. Its adjacency matrix will be denoted by $A_v$, its spectral radius by $\rho_v$ and the positive normalized right eigenvector belonging to $\rho_v$ by $\vec{x}_v$. The spectral radius will also be called \textit{the spectral radius of $v$}.
\end{td}
\begin{tr}
	Note that previous definition makes sense, since by Proposition \ref{PPerronFrobenius} such an $\vec{x}_v$ exists and is unique. 
\end{tr}
For the next proof, it is important to note that the number of walks from $v$ to $v$ of length $n$ in $D_v$ is the same as in $D$. This follows from the observation we made before that any such walk must consist solely of vertices that are strongly connected to $v$. Therefore we still have that $\Omega_n(v) = (A_v^n)_{v, v}$. 

\begin{tx} \label{TBoundOmega}
	Let $D = (V, E)$ be some $m$-automaton and $v$ any tied vertex of $D$. Then there exists some constant $C > 0$ so that 
	\begin{enumerate}[(i)]
		\item for all $n \in \N$ we have $\Omega_n(v) \leq \rho_v^n$.
		\item for infinitely many $n \in \N$ we have $\Omega_n(v) \geq C\rho_v^n$.
	\end{enumerate}
\end{tx}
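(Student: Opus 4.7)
The plan is to translate everything into the matrix identity $\Omega_n(v) = (A_v^n)_{v,v}$ and extract both bounds from a single Perron--Frobenius eigenvector equation. By Lemma \ref{Lstrconnirr} the adjacency matrix $A_v$ is irreducible and non-negative, so Proposition \ref{PPerronFrobenius} supplies $\rho_v > 0$, a strictly positive right eigenvector $\vec{x}_v$, and the iterated relation $A_v^n \vec{x}_v = \rho_v^n \vec{x}_v$. Reading off the $v$-th coordinate gives
$$\sum_{u \in V_v} (A_v^n)_{v,u}\,(\vec{x}_v)_u \;=\; \rho_v^n\,(\vec{x}_v)_v,$$
which is the single identity driving both halves of the theorem.

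For (i), I would use the non-negativity of all summands to keep only the term $u = v$, obtaining $(A_v^n)_{v,v}(\vec{x}_v)_v \leq \rho_v^n (\vec{x}_v)_v$, and divide by the strictly positive $(\vec{x}_v)_v$ to conclude $\Omega_n(v) \leq \rho_v^n$. For (ii), I would instead bound each $(\vec{x}_v)_u$ from above by $M := \max_{u \in V_v}(\vec{x}_v)_u$ to produce a lower bound on the $v$-th row sum: $\sum_u (A_v^n)_{v,u} \geq \frac{(\vec{x}_v)_v}{M}\rho_v^n$. Pigeonhole over the $|V_v|$ possible endpoints then yields, for each $n$, some $u_n \in V_v$ with $(A_v^n)_{v,u_n} \geq \frac{(\vec{x}_v)_v}{|V_v|\,M}\,\rho_v^n$, and since $V_v$ is finite a fixed vertex $u^\ast \in V_v$ occurs as $u_n$ for infinitely many $n$.

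To convert these walks from $v$ to $u^\ast$ into closed walks at $v$, I would fix once and for all a walk $W^\ast$ from $u^\ast$ to $v$ of some length $q^\ast$ (which exists because $u^\ast \in V_v$) and concatenate. Distinct walks $v \to u^\ast$ yield distinct closed walks $v \to v$ under this operation, so along the infinite subsequence where $u_n = u^\ast$,
$$\Omega_{n+q^\ast}(v) \;\geq\; (A_v^n)_{v,u^\ast} \;\geq\; \tfrac{(\vec{x}_v)_v}{|V_v|\,M}\,\rho_v^n \;=\; C\,\rho_v^{\,n+q^\ast},$$
with $C := (\vec{x}_v)_v / (|V_v|\,M\,\rho_v^{q^\ast}) > 0$. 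I expect the only delicate point to be resisting the temptation to upgrade (ii) to ``for all sufficiently large $n$'': the periodic case---for instance when $D_v$ is a single cycle---forces $\Omega_n(v) = 0$ on most residue classes modulo the period, so the \emph{infinitely many $n$} phrasing is essential, and it falls out automatically from applying pigeonhole to the finite set $V_v$ rather than requiring any primitivity argument.
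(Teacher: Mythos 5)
Your proposal is correct and follows essentially the same route as the paper: read the $v$-th row of the iterated Perron--Frobenius identity $A_v^n\vec{x}_v=\rho_v^n\vec{x}_v$, get (i) by dropping all but the diagonal term, and get (ii) by pigeonholing over the finitely many endpoints and appending a fixed return path to $v$ (your closing remark about periodicity correctly explains why only ``infinitely many $n$'' can be claimed). The only cosmetic difference is that you normalize by $\max_u(\vec{x}_v)_u$ and fix a single recurring endpoint $u^\ast$, whereas the paper keeps the eigenvector weights and takes the minimum of the resulting constants over all endpoints.
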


\begin{proof}
	Consider the strongly connected subdigraph $D_v$ of $D$ at $v$. By Perron-Frobenius we get the equation $A_v\vec{x}_v = \rho_v\vec{x}_v$, which extends to $A_v^n\vec{x}_v = \rho_v^n\vec{x}_v$ for any $n \in \Z_{\geq 0}$. Because we are interested in the walks from $v$ to $v$, we study the row corresponding to vertex $v$ of this set of linear equations. Without loss of generality we may assume this to be the first row and write $u_2, \ldots, u_{s}$ for the other vertices in $V_v$. This gives us the following expression:
	$$\Omega_n(v)x_{v, 1} + \Omega_n(v, u_2)x_{v, 2} + \ldots + \Omega_n(v, u_{s})x_{v, s} = \rho_v^nx_{v, 1}.$$	
	Since $x_{v, i} > 0$ and $\Omega_n(v, u_i) \geq 0$ for all $i$, it is immediate that $\Omega_n(v) \leq \rho_v^n$ for all $n$, proving \emph{(i)}. The inequality in \emph{(ii)} needs a bit more consideration. By the pigeonhole principle, we get that for all $n$ we either have $\Omega_n(v)x_{v, 1} \geq \frac 1m\rho_v^nx_{v, 1}$ or there is some $i\geq 2$ such that $\Omega_n(v, u_i)x_{v, i} \geq \frac 1m\rho_v^nx_{v, 1}$. For the second case we look at some path $P_i$ from $u_i$ to $v$. This path must exist, since the digraph is strongly connected. It should be clear that $\Omega_{n+l(P_i)}(v) \geq \Omega_n(v, u_i)$ which then leads to $$\Omega_{n+l(P_i)}(v) \geq \left(\frac{x_{v, 1}}{sx_{v, i}}{\rho_v^{-l(P_i)}}\right)\rho_v^{n + l(P_i)}.$$
	The path $P_i$ does not depend on the value of $n$ and therefore we can fix a choice for each $2 \leq i \leq s$. Furthermore, let $P_1$ be the empty path. Taking the constant $$C := \min_{1 \leq i \leq s}\left(\frac{x_{v, 1}}{sx_{v, i}}{\rho_v^{-l(P_i)}}\right)>0$$ we indeed get that for infinitely many $n$,
	\[\Omega_n(v)\geq C\rho_v^n.\qedhere \]
\end{proof}
\begin{tr}
	While $\Omega_n(v) \geq C\rho_v^n$ does not necessarily hold for all $n$, but it does hold for at least one value between $n$ and $n + \max_{1 \leq i \leq s}(l(P_i))$. 
\end{tr}	

\begin{tc}
	Let $v$ be a tied vertex of some $m$-automaton, then its spectral radius $\rho_v$ is bigger than $1$.
\end{tc}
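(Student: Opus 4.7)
The plan is to exploit the defining property of a tied vertex, namely that it admits two distinct closed walks of the same length, together with part \emph{(i)} of Theorem \ref{TBoundOmega} which provides the upper bound $\Omega_n(v) \leq \rho_v^n$.

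First I would pick two distinct walks $W_1,W_2$ from $v$ to $v$ of the same length $n$, which exist by the definition of a tied vertex. For any non-negative integer $k$, each of the $2^k$ sequences $(i_1,\ldots,i_k) \in \{1,2\}^k$ gives rise to a composed closed walk $W_{i_1} \circ W_{i_2} \circ \cdots \circ W_{i_k}$ from $v$ to $v$, of total length exactly $kn$. These $2^k$ walks are pairwise distinct (they differ in at least one of the $k$ constituent blocks), so $\Omega_{kn}(v) \geq 2^k$.

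Next I would combine this with Theorem \ref{TBoundOmega}\emph{(i)}, which gives $\Omega_{kn}(v) \leq \rho_v^{kn}$. Chaining these yields $2^k \leq \rho_v^{kn}$, and hence $\rho_v \geq 2^{1/n} > 1$, which is the desired inequality. No part of this argument is delicate: the only ``obstacle'' worth flagging is making sure the $2^k$ compositions really are distinct walks, but this is immediate because two walks of the same length coincide if and only if every edge in their sequences coincides, and the $W_{i_j}$ blocks occupy a fixed partition of the length-$kn$ walk into $k$ consecutive segments of length $n$.
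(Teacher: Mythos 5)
Your argument is correct and uses the same two ingredients as the paper's proof: the definition of a tied vertex to lower-bound $\Omega_n(v)$, combined with Theorem \ref{TBoundOmega}\emph{(i)}. The $2^k$-composition detour is unnecessary, however, since taking $k=1$ already gives $\Omega_n(v)\geq 2 \leq \rho_v^n$ and hence $\rho_v \geq 2^{1/n} > 1$, which is exactly the paper's one-line argument.
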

\begin{proof}
	Since $v$ is tied, we know there is some $N \in \N$ for which $\Omega_N(v) \geq 2$. Theorem \ref{TBoundOmega} now implies $\rho_v \geq \sqrt[N]{2} > 1$. 
\end{proof}
\begin{tc}\label{CsupB}
	For any tied vertex $v$ we have 
	$$\sup_{n\in\N}\left(\frac{\log_m(\Omega_n(v))}{n}\right) = \log_m(\rho_v).$$
\end{tc}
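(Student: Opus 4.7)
The plan is to derive the corollary directly from Theorem \ref{TBoundOmega}, which already packages both of the needed inequalities on $\Omega_n(v)$; the corollary is essentially its logarithmic reformulation. I would handle the $\leq$ and $\geq$ directions separately.

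For the upper bound, I would invoke part \emph{(i)}: since $\Omega_n(v) \leq \rho_v^n$ for every $n \in \N$, applying $\log_m$ (and interpreting $\log_m(0) = -\infty$ in the trivial cases, which does not affect the supremum) and dividing by $n$ yields $\frac{\log_m(\Omega_n(v))}{n} \leq \log_m(\rho_v)$ uniformly in $n$. Taking the supremum gives one direction.

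For the matching lower bound, I would use part \emph{(ii)}: there exist a constant $C > 0$ and infinitely many $n \in \N$ with $\Omega_n(v) \geq C \rho_v^n$. Along this infinite set of indices, taking $\log_m$ and dividing by $n$ gives
$$\frac{\log_m(\Omega_n(v))}{n} \;\geq\; \log_m(\rho_v) + \frac{\log_m(C)}{n}.$$
Since the correction term $\frac{\log_m(C)}{n}$ tends to $0$ as $n \to \infty$ through this infinite subsequence, for each $\varepsilon > 0$ we can find some $n$ with $\frac{\log_m(\Omega_n(v))}{n} > \log_m(\rho_v) - \varepsilon$. Hence the supremum is at least $\log_m(\rho_v)$.

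I do not anticipate a genuine obstacle here: the only thing to watch is that the lower-bound argument requires the existence of $n$ with $\Omega_n(v) > 0$ so that the logarithm is well-defined, which is guaranteed by part \emph{(ii)} (and, independently, by $v$ being tied). Combining the two bounds then gives the stated equality.
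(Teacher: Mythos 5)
Your proposal is correct and follows essentially the same route as the paper: the upper bound from Theorem \ref{TBoundOmega}\emph{(i)} and the lower bound from \emph{(ii)} together with the observation that $\log_m(C)/n \to 0$. If anything, your handling of the lower bound is slightly more careful than the paper's, since you explicitly restrict to the infinite set of $n$ where $\Omega_n(v)\geq C\rho_v^n$ holds before letting $n\to\infty$.
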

\begin{proof}
	From Theorem \ref{TBoundOmega} we get 
	$$\sup_{n\in\N}\left(\frac{\log_m(\rho_v^n)}{n}\right) \geq \sup_{n\in\N}\left(\frac{\log_m(\Omega_n(v))}{n}\right) \geq \sup_{n\in\N}\left(\frac{\log_m(C\rho_v^n)}{n}\right).$$
	The left hand side equals $\log_m(\rho_v)$ and the right hand side $\sup_{n\in\N}(\frac{\log_m(C)}n) + \log_m(\rho_v)$. Since $n$ gets arbitrarily large and $\log_m(C)$ is a negative constant, the first term is equal to zero. So both the lower and upper bound are $\log_m(\rho_v)$, which proves the corollary.
\end{proof}
\begin{tl} \label{Latmostntoexactn}
	Let $D = (V, E)$ be an $m$-automaton, $v_1, \ldots, v_s$ tied vertices in $V$ and $k$ some positive integer. Define $\rho := \max_{1\leq j \leq s}(\rho_{v_j})$ and let $K_k \subset \Z_{\geq 0}^s$ consist of the solutions $\vec{k} = (k_1, \ldots, k_s)$ to $k_1 + \ldots + k_s = k$. Then, for all $k \in \N$, the number $\mathcal W_k$, defined as \[ \mathcal{W}_k := \sum_{\vec{k} \in K_k} \prod_{i=1}^s \Omega_{\leq k_i}(v_i),\] satisfies $\mathcal W_k = O(k^{s-1}\rho^k),$ where the implied constant depends on $\rho$.
\end{tl}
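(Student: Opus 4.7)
The plan is to bound $\mathcal{W}_k$ by factoring the exponential $\rho^k$ out of the product and then showing that the combinatorial count $|K_k|$ is polynomial of degree $s-1$ in $k$.

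First I would use Theorem \ref{TBoundOmega}\,(i) on each factor: since $\rho_{v_i} \leq \rho$ for every $i$, we have $\Omega_n(v_i) \leq \rho_{v_i}^n \leq \rho^n$ for all $n \geq 0$. Summing the geometric series gives
\[
\Omega_{\leq k_i}(v_i) \;\leq\; \sum_{n=0}^{k_i} \rho^n \;=\; \frac{\rho^{k_i+1}-1}{\rho-1} \;\leq\; \frac{\rho}{\rho-1}\,\rho^{k_i},
\]
where we used that $\rho>1$ because each $\rho_{v_j}>1$ by the corollary following Theorem \ref{TBoundOmega}. Multiplying over $1 \leq i \leq s$, for every $\vec{k} \in K_k$ we obtain
\[
\prod_{i=1}^s \Omega_{\leq k_i}(v_i) \;\leq\; \left(\frac{\rho}{\rho-1}\right)^{\!s} \rho^{k_1+\cdots+k_s} \;=\; \left(\frac{\rho}{\rho-1}\right)^{\!s} \rho^{k}.
\]
Crucially, the right-hand side no longer depends on the particular composition $\vec{k}$, so the bound can be pulled outside the sum over $K_k$.

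Next I would count $|K_k|$ by the classical stars-and-bars argument: the number of compositions of $k$ into $s$ non-negative integer parts is $\binom{k+s-1}{s-1}$, which for fixed $s$ is a polynomial in $k$ of degree $s-1$, hence $|K_k| = O(k^{s-1})$. Combining with the per-term bound,
\[
\mathcal{W}_k \;\leq\; |K_k| \cdot \left(\frac{\rho}{\rho-1}\right)^{\!s} \rho^k \;=\; O\!\left(k^{s-1}\rho^k\right),
\]
with the implied constant depending on $s$ and $\rho$ (and, through $s$, implicitly on the automaton) but not on $k$, which is what was claimed.

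There is no real obstacle here: the only slightly subtle point is checking that $\rho > 1$ so that the geometric-series bound is uniform in $k_i$, but this is exactly the content of the earlier corollary. The combinatorial factor $k^{s-1}$ comes entirely from the number of ways to partition the $k$ steps among the $s$ vertices, and the exponential factor $\rho^k$ comes from the exponent $k_1 + \cdots + k_s = k$ collapsing regardless of how the $k_i$ are distributed.
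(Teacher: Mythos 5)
Your proof is correct. The underlying estimates are the same as in the paper — the crucial inputs are Theorem~\ref{TBoundOmega}\,(i) together with $\rho > 1$, giving the geometric-series bound $\Omega_{\leq k_i}(v_i) \leq \tfrac{\rho}{\rho-1}\rho^{k_i}$ — but the organization differs. The paper argues by induction on $s$: peel off the last factor, apply the $s{-}1$ case to the inner sum, and absorb the outer sum of $k{+}1$ terms into one extra power of $k$. You instead observe that the per-composition bound $\left(\tfrac{\rho}{\rho-1}\right)^{s}\rho^{k_1+\cdots+k_s} = \left(\tfrac{\rho}{\rho-1}\right)^{s}\rho^{k}$ is independent of $\vec{k}$, pull it out of the sum, and count $|K_k| = \binom{k+s-1}{s-1} = O(k^{s-1})$ by stars and bars. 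Your closed-form version is arguably cleaner: it makes transparent that the factor $k^{s-1}$ is exactly the composition count and that the exponential collapses because $\sum k_i = k$ is fixed, whereas the induction hides this inside the cumulative $O$-constants. The induction in the paper is essentially your argument unrolled, so nothing is lost and the constants track the same way; the only thing the inductive form buys is that it avoids having to cite the binomial asymptotics explicitly.
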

\begin{proof}
	We will prove this lemma by induction on the number of tied vertices $s$. To keep the notation uncluttered we assume $\rho_{v_1}\geq \ldots \geq \rho_{v_s}$ and therefore $\rho = \rho_{v_1}$. This can be done without loss of generality.
	
	For $s = 1$ we get that $\mathcal{W}_k = \Omega_{\leq k}(v_1) = \sum_{i=0}^k \Omega_k(v_1)$. Theorem \ref{TBoundOmega} provided $\rho^i$ as an upper bound for each $\Omega_i(v_1)$ so that
	$$\mathcal{W}_k \leq \frac{\rho^{k+1} -1}{\rho -1} \leq \frac{\rho}{\rho - 1}\rho^k = O(\rho^k).$$
	The last step uses $\rho > 1$. This proves the induction basis. 
	
	Now suppose we know the lemma holds for any $s-1$ tied vertices (induction hypothesis). We will prove that it also holds for $s$ tied vertices. First we take out the last factor of the product in $\mathcal{W}_k$ to get
	\[\sum_{k_0=0}^k \Omega_{\leq k_0}(v_s) \sum_{\vec{k}\in K_{k-k_0}} \prod_{i=1}^{s-1}\Omega_{\leq k_i}(v_i),\] 
	where $K_{k-k_0}$ is now a subset of $\Z_{\geq 0}^{s-1}$ instead of $\Z^s_{\geq 0}$. For each $k_0$ we can bound both factors of the main summand, respectively by $O(\rho^k_0)$ and $O(k^{s-2}\rho^{k-k_0})$, using the induction basis and induction hypothesis. This yields
	$$\mathcal{W}_k = \sum_{k_0=0}^kO(k^{s-2}\rho^k) = O(k^{s-1}\rho^k).$$
	By the principle of induction, this proves the claim.
\end{proof}
\begin{tc}\label{Catmostntoexactn}
	There exists a constant $C'>0$ so that, for infinitely many $k$ and some $1 \leq j \leq s$,
	\[\mathcal{W}_k \leq C'k^{s-1}\Omega_n(v_j).\]
\end{tc}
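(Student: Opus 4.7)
The plan is to combine the upper bound on $\mathcal{W}_k$ from Lemma \ref{Latmostntoexactn} with the lower bound on $\Omega_k(v)$ from Theorem \ref{TBoundOmega}(ii), interpreting the $n$ in the statement as the same $k$ (the index variable in the corollary). The key observation is that both bounds feature $\rho^k$ as the dominant growth factor, so dividing one by the other should cancel the exponential and leave the desired polynomial factor $k^{s-1}$.

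First I would fix an index $j \in \{1, \ldots, s\}$ such that $\rho_{v_j} = \rho = \max_{1 \le i \le s}(\rho_{v_i})$. Applying Theorem \ref{TBoundOmega}(ii) to the tied vertex $v_j$ yields a constant $C_j > 0$ and infinitely many positive integers $k$ with
\[
\Omega_k(v_j) \geq C_j\, \rho_{v_j}^k = C_j\, \rho^k.
\]
Second, Lemma \ref{Latmostntoexactn} supplies a constant $C'' > 0$ (depending on $\rho$ and $s$) such that for every $k \in \N$,
\[
\mathcal{W}_k \leq C''\, k^{s-1} \rho^k.
\]
Combining the two, for each of the infinitely many $k$ produced in the first step,
\[
\mathcal{W}_k \leq C''\, k^{s-1} \rho^k \leq \frac{C''}{C_j}\, k^{s-1} \Omega_k(v_j).
\]
Setting $C' := C''/C_j$ gives the corollary with this fixed choice of $j$.

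I do not expect any serious obstacle: the proof is essentially a one-line consequence of pairing Theorem \ref{TBoundOmega}(ii) with Lemma \ref{Latmostntoexactn}. The only subtle point is the mild mismatch between the ``infinitely many'' lower bound in Theorem \ref{TBoundOmega}(ii) and the ``for all $k$'' upper bound in Lemma \ref{Latmostntoexactn}; because the corollary only claims the inequality for infinitely many $k$, the intersection of the two index sets (which remains infinite) suffices, and no Cesàro-style argument or use of the remark about the window $[n, n + \max_i l(P_i)]$ is needed.
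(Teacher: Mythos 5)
Your proposal is correct and is essentially the paper's own argument: the paper likewise applies Theorem \ref{TBoundOmega}(ii) to get infinitely many $k$ with $\rho_v^k \leq \frac{1}{C}\Omega_k(v)$ and then invokes Lemma \ref{Latmostntoexactn} directly. Your reading of the $n$ in the statement as $k$ is also the intended one, consistent with how the corollary is used later in the proof of Theorem \ref{Tupperalpha}.
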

\begin{proof}
	Theorem \ref{TBoundOmega} shows that for any tied vertex $v$ there are infinitely many $k$ such that $\rho_v^k \leq \frac 1C \Omega_k(v)$. The result now follows directly from Lemma \ref{Latmostntoexactn}.
\end{proof} 

\subsection{An upper bound for non-sparse series}
In this subsection we will prove two theorems. The first gives a supremum of all values $\alpha$ for which $\# E((a_i)_{i \in \N})_N \geq N^\alpha$ holds. The second shows that this supremum can be expressed as the logarithm of an integral root of a Perron number. To state and prove these results we will use the theory developed in Subsection \ref{SBoundsnumberwalks}.

\begin{tx} \label{Tupperalpha}
	Let $D=(V, E)$ be an $m$-automaton, $V'\subset V$ the set of tied vertices and $(a_i)_{i \in \N}$ its automatic sequence. Define \[B:= \max_{v \in V'} (\log_m(\rho_v)).\]
	Then \[\sup\{\alpha > 0 : \#E((a_i)_{i \in \N})_N \geq N^\alpha \textup{ for sufficiently large $N$}\} = B.\]
\end{tx}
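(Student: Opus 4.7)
The plan is to prove the two inequalities $\sup \geq B$ and $\sup \leq B$ separately, modeling the lower bound on the tied-vertex construction in the proof of Theorem~\ref{Tautosparse} and the upper bound on the bookkeeping of Lemma~\ref{Latmostntoexactn}. For the lower bound, I would fix a tied vertex $v^{\ast} \in V'$ with $\log_m(\rho_{v^{\ast}}) = B$ and pick (just as in the proof of Theorem~\ref{Tautosparse}) a walk $W_s$ from $\mathrm{Start}$ to $v^{\ast}$ of length $z$, together with a walk $W_e$ from $v^{\ast}$ to a nonzero-labeled vertex of length $y$ whose last edge is labeled non-zero. For every walk $W$ from $v^{\ast}$ to itself of length $n$, the composition $W_s \circ W \circ W_e$ lands at a nonzero vertex and, by Lemma~\ref{Lwalksize}, distinct $W$ of length $n$ produce elements of $E((a_i)_{i\in\N})$ of distinct sizes bounded by $m^{z+n+y}$, so $\#E_{m^{z+n+y}} \geq \Omega_n(v^{\ast})$. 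The remark after Theorem~\ref{TBoundOmega} provides a constant $L$ such that every sufficiently large $n$ admits some $n' \in [n,n+L]$ with $\Omega_{n'}(v^{\ast}) \geq C\rho_{v^{\ast}}^{n'}$; combined with the monotonicity of $N \mapsto \#E_N$, this yields some $C' > 0$ with $\#E_N \geq C'N^{B}$ for all sufficiently large $N$. Hence every $\alpha \in (0,B)$ satisfies the defining condition of the supremum, and $\sup \geq B$.

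For the upper bound, it suffices to show $\#E_N = O((\log N)^R N^B)$ for some integer $R$, since for $\alpha > B$ this forces $\#E_N < N^\alpha$ for all sufficiently large $N$. Taking $N = m^{k+1}-1$, $\#E_N$ is the number of walks of length $k+1$ from Start to a nonzero vertex. I would partition such walks by the sequence of strongly connected components $T_0, T_1, \ldots, T_r$ they traverse (strictly increasing in the condensation DAG) together with the finitely many choices of transition edges between consecutive $T_i$. For each choice, the number of compatible walks of length $k+1$ equals
\[
\sum_{n_0 + \cdots + n_r = k+1-r}\ \prod_{i=0}^{r} P_i(n_i),
\]
where $P_i(n_i)$ counts walks inside $T_i$ of length $n_i$ from the prescribed entry to the prescribed exit. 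For tied $T_i$, the Perron-eigenvector argument of Theorem~\ref{TBoundOmega}(i) extends to give $P_i(n_i) \leq C_i \rho_{T_i}^{n_i} \leq C_i m^{Bn_i}$; for non-tied $T_i$ (singletons or pure cycles) one has $P_i(n_i) \leq 1$. Bounding the number of compositions by $O(k^r)$ gives $O(k^R m^{Bk})$ per chain, and summing over the finitely many chains, transition edges, and nonzero endpoints yields $\#E_N = O((\log N)^R N^B)$, proving $\sup \leq B$.

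The hard part will be the upper bound, and specifically the verification that non-tied blocks contribute at most a polylogarithmic factor, which relies on $\rho_{T_i} \leq 1$ whenever $T_i$ is not tied. This follows from Lemma~\ref{Lvcyc}: tiedness is equivalent to the existence of two distinct closed walks of equal length at a vertex, so any non-tied SCC accessible to a nonzero vertex must be a singleton ($\rho = 0$ or $\rho = 1$ according as there is or is not a self-loop) or a single pure cycle ($\rho = 1$). A cleaner packaging would replace the inner sum above by an instance of $\mathcal W_k$ from Lemma~\ref{Latmostntoexactn}, by bounding each $P_i(n_i)$ above by $\Omega_{\leq n_i + L_i}(v_i)$ for a chosen tied vertex $v_i$ of $T_i$ and a suitable detour length $L_i$; the required bound $O(k^{t-1} m^{Bk})$ then falls out of that lemma directly.
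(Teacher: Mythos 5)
Your proof is correct and establishes the theorem, but it follows a genuinely different route in the upper bound than the paper does, and a modestly different route in the lower bound.

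For the lower bound, the paper works from Corollary~\ref{CsupB}: it picks a single $x$ with $\frac{1}{x}\log_m(\Omega_x(v))$ close to $B$, iterates that block of walks $k$ times exactly as in Theorem~\ref{Tautosparse}, and concludes $\#E_N \geq N^\alpha$ for each fixed $\alpha < B$. You instead invoke Theorem~\ref{TBoundOmega}(ii) and its remark directly, inject the full family of $\Omega_n(v^{\ast})$ closed walks of length $n$ between $W_s$ and $W_e$, and deduce $\#E_N \geq C'N^B$ for all large $N$ after smoothing over the bounded gaps. Your version is marginally stronger (it gives a constant-times-$N^B$ lower bound rather than $N^{B-\varepsilon}$), and the injectivity you use -- that distinct walks from $v^{\ast}$ of length $n$ have distinct sizes -- is correct because a walk in an $m$-automaton is determined by its start vertex and label string.

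For the upper bound, the paper partitions walks by the ordered list of \emph{tied vertices} they pass through, estimates the tied-to-tied segments by $O((\log N)^r)$ via the sparse structure, bounds the loop pieces by $\Omega_{\leq k_i}(v_i)$, and then runs a contradiction argument through Lemma~\ref{Latmostntoexactn} and Corollary~\ref{Catmostntoexactn}. You instead partition by the chain of \emph{strongly connected components} in the condensation DAG together with the transition edges, bound each block count $P_i(n_i)$ by $C_i\rho_{T_i}^{n_i} \leq C_i m^{Bn_i}$ via the Perron-eigenvector inequality, and conclude $\#E_N = O((\log N)^R N^B)$ directly, with no contradiction setup and no appeal to Corollary~\ref{Catmostntoexactn}. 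The SCC decomposition is also canonically unique, whereas the paper explicitly concedes non-uniqueness of its tied-vertex decomposition; this is a genuine simplification. A few points you should supply to make the argument airtight: \emph{(a)} the extension of Theorem~\ref{TBoundOmega}(i) to non-diagonal entries (the row argument gives $\Omega_n(u,w) \leq (x_u/x_w)\rho^n$, but this needs to be stated since the theorem only asserts the case $u=w$); \emph{(b)} the claim that a non-tied SCC reachable to a non-zero vertex is a singleton or a simple cycle -- this does follow from Lemma~\ref{Lvcyc} via the observation that tiedness propagates within an SCC and that a strongly connected digraph with $\geq 2$ vertices that is not a single cycle has a vertex with two distinct primitive closed walks, but you should spell it out; and \emph{(c)} you have a small inversion in your parenthetical: a non-tied singleton has $\rho = 1$ when there \emph{is} a self-loop and $\rho = 0$ when there is not, not the other way around. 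Your alternative packaging through Lemma~\ref{Latmostntoexactn} at the end is essentially the paper's route and works, but it is the less economical of the two options you describe.

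Finally, the edge case $V' = \varnothing$ (the sequence is sparse, both the max and the sup are over the empty set) should be dispatched explicitly before assuming $V' \neq \varnothing$, as the paper does.
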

\begin{proof}
	In the case that $V'$ is empty we get an empty maximum as definition for $B$. However, the supremum over the allowed $\alpha$ is empty as well, since $(a_i)_{i \in \N}$ will be sparse. So from now on we may assume $V'$ is non-empty.
	
	We will first prove that $B$ gives an upper bound for the values that $\alpha$ can take. Secondly we will show that $B$ is indeed the best we can do to give such an upper bound. To prove these two things we heavily rely on the fact that, by Corollary \ref{CsupB}, $B$ is equal to 
	\begin{equation}\label{Bsup}
		\sup_{x\in\N, v \in V'}\left(\frac{\log_m(\Omega_x(v))}{x}\right).
	\end{equation} 
	
	Let $\alpha$ be any value less than $B$. By (\ref{Bsup}) we know that there must exist some $v \in V'$ and $x \in \N$ such that $\alpha \leq \frac{\log_m(\Omega_x(v))}{x} \neq 0$. We follow the same proof as for the first part of Theorem \ref{Tautosparse}, except now we have $\Omega_x(v)$ walks, $W_1, \ldots, W_{\Omega_x(v)}$, to choose from for $W_{i_1}, \ldots, W_{i_k}$. Therefore, there are at least $$\frac{(\Omega_x(v))^{k}-1}{\Omega_x(v)-1}$$ distinct values in $\# E((a_i)_{i \in \N})_N$ if $N = m^{(k-1)x+y+z}$. Since $\Omega_x(v)$ does not equal $1$, we can choose $k$, and therefore $N$, large enough such that $\#E((a_i)_{i \in \N})_N \geq N^\alpha$.
	
	Now suppose that $\alpha > B$, then it suffices to prove that there exist arbitrarily large $N$ such that $N^\alpha \geq \# E((a_i)_{i \in \N})_N$. In other words, there must be infinitely many such $N$. We will show that any $N = m^k$, where $k$ meets the condition of Corollary \ref{Catmostntoexactn} and is sufficiently large, has this property. 
	
	The proof will be by contradiction. So assume $\alpha = B + \varepsilon$ for some $\varepsilon > 0$ and $\#E((a_i)_{i \in \N})_N \geq N^\alpha$ for all $N$ sufficiently large. We know from (\ref{Bsup}) that $B \geq \frac{\log_m(\Omega_x(v))}{x}$ holds for all $x \in \N$ and $v\in V'$. In particular it holds for $x = k$. Now let $N = m^k$ and $k$ sufficiently large, then we can rewrite this as
	$$\# E((a_i)_{i \in \N})_N \geq m^{kB}N^{\varepsilon} \geq \Omega_k(v) N^\varepsilon.$$
	
	On the other hand we can also give an upper bound for $\# E((a_i)_{i \in \N})_N$. Recall that $\# E((a_i)_{i \in \N})_N$ counts the walks of length $k$ that start at `Start' and end in a non-zero vertex. Any such walk has a certain structure, described below, and we will be able to estimate the number of walks with this structure. Let $s\geq 0$ be an integer and $v_1, \ldots, v_s \in V'$ all distinct. We define the following sets:
	\begin{enumerate}[(i)]
		\item the set $\Omega(\textup{Start}, v_1)(V')$ consisting of walks from `Start' to $v_1$ not passing through any tied vertex.
		\item for $1 \leq i \leq s$ the set $\Omega(v_i)$ consisting of walks from $v_i$ to $v_i$.
		\item for $1 \leq i \leq s-1$ the set $\Omega(v_i, v_{i+1})(V')$ consisting of walks from $v_i$ to $v_{i+1}$ not passing through any tied vertex.
		\item the set $\Omega(v_s, \textup{End})(V')$ consisting of walks from $v_s$ to any non-zero vertex not passing through any tied vertex.
	\end{enumerate}
	In Figure \ref{Fnonsparse} the sets are visualized for $s=2$.
	
		\begin{figure}[h]
		\centering
		\includegraphics[width=16cm]{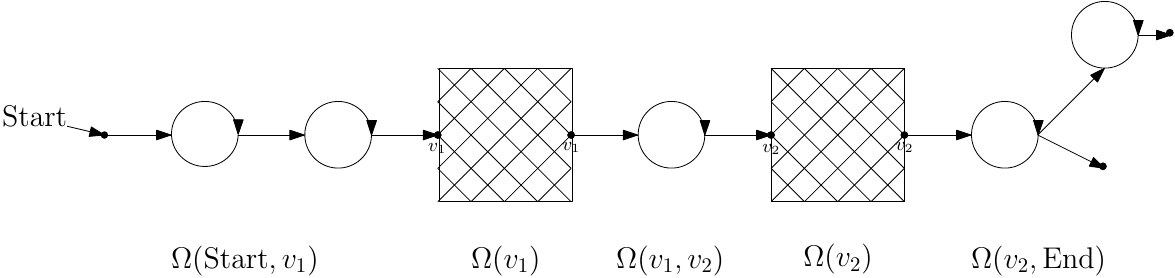}
		\caption{Possible paths from `Start' to some non-zero vertex while passing through $v_1$ and $v_2$, both tied vertices. The sets $\Omega(\ldots)$ consist of all possible paths for their specific part.} \label{Fnonsparse}
	\end{figure}

	Now let $W$ be any walk from `Start' to a non-zero vertex. There exists some integer $s \geq 0$ and $v_1, \ldots, v_s$ distinct in $V'$ for which we can take a unique element of all above defined sets such that $W$ is a composition of these walks. Note that there is only one possibility for a composition, since the begin and end vertex of each walk are known. However, the choice of $v_1, \ldots, v_s$ is not necessarily unique, but we will only need existence. 
	
	An $m$-automaton has a finite number of vertices and therefore of tied vertices. This implies there are at most $(|V'| + 1)!$ choices for $s$, $v_1, \ldots, v_s$. Since we are only interested in walks of length $k$, one of those combinations must give the largest number of walks of length $k$. From now on we write $s$, $v_1$, \ldots, $v_s$ for this combination. We will continue by estimating the number of walks that can be written as a composition of elements of the sets defined before.
	
	Recall from the proof of Theorem \ref{Tautosparse} that either the number of walks between two vertices is of order $O(\log(N)^r)$ for some integer $r$, or the $m$-automaton $D$ includes a tied vertex. This gives an estimate for the cardinality of the sets of (i), (iii) and (iv) when we also require the length to be at most $k$. This leaves $k' \leq k$ steps for the walks in the sets of (ii). Note that the number of elements of length at most $k_i$ for any such set equals $\Omega_{\leq k_i}(v_i)$. Defining $K_k := \{(k_1, \ldots, k_s) \subset \Z_{\geq 0}^s \mid k_1 + \ldots + k_s = k \}$, we get
	\[\# E((a_i)_{i \in \N})_N \leq (|V'|+1)! O(\log(N)^r)\sum_{\vec{k} \in K_k}\prod_{i=1}^s \Omega_{\leq k_i}(v_i).\]
	
	Now we can use Corollary $\ref{Catmostntoexactn}$ to see that \[\# E((a_i)_{i \in \N})_N \leq C k^{s-1}\Omega_k(v) \log(N)^r = C \Omega_k(v)\log(N)^{r+s-1}\] for some tied vertex $v$, constant $C>0$ and infinitely many $k$. Putting everything together we have 
	\[\Omega_k(v)N^\varepsilon < \# E((a_i)_{i \in \N})_N \leq C\Omega_k(v)\log(N)^{r+s-1},\]
	for infinitely many $N = m^k$. However, since $\log(N)^{r+s-1}$ will grow slower than $N^\varepsilon$, there will exist some sufficiently large $N$ such that $\Omega_k(v)N^{\varepsilon} > C\Omega_k(v)\log(N)^{r+s-1} \geq \# E((a_i)_{i \in \N})_N$, a contradiction. We conclude that there does not exist a sufficiently large $N$ such that for all $n \geq N$ it holds that $\# E((a_i)_{i \in \N})_n \geq n^\alpha$ if $\alpha$ is bigger than $B$. Hence $B$ is indeed the supremum of all $\alpha$ for which this condition does hold. 
\end{proof}

\begin{tr}
	Theorem \ref{Tupperalpha} tells us that any $\alpha < B$ can be used in Theorem \ref{Tautosparse}(i) and that no $\alpha > B$ can. However, it says nothing about $\alpha = B$. As can be seen in Figures \ref{FalphaisB} and \ref{FalphaisnotB}, either situation occurs. In both examples we have $B = \frac{\log_2(2)}{2} = \frac 12$ and in the first we see $\# E((a_i)_{i \in \N})_N \geq N^{\frac 12}$ for all $N \geq 1$, whereas in the second $\# E((a_i)_{i \in \N})_N < N^{\frac 12}$ for infinitely many $N$. 
	
	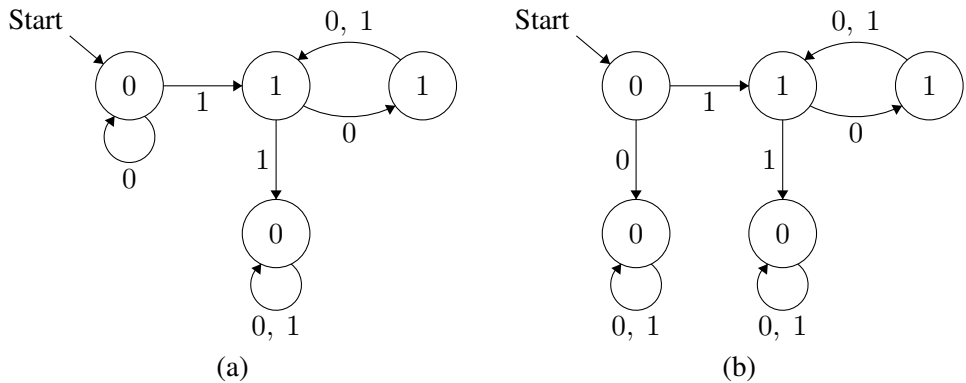
\begin{figure}[h] 
		\centering
		\subfloat[\label{FalphaisB}]{\centering \begin{tikzpicture}[scale=0.15]
				\tikzstyle{every node}+=[inner sep=0pt]
				\draw [black] (17.7,-13.5) circle (3);
				\draw (17.7,-13.5) node {$0$};
				\draw [black] (30.7,-13.5) circle (3);
				\draw (30.7,-13.5) node {$1$};
				\draw [black] (43.7,-13.5) circle (3);
				\draw (43.7,-13.5) node {$1$};
				\draw [black] (30.7,-26.5) circle (3);
				\draw (30.7,-26.5) node {$0$};
				\draw [black] (12.4,-9.1) -- (15.39,-11.58);
				\draw (11.85,-7.66) node [left] {Start};
				\fill [black] (15.39,-11.58) -- (15.1,-10.69) -- (14.46,-11.46);
				\draw [black] (19.023,-16.18) arc (54:-234:2.25);
				\draw (17.7,-20.75) node [below] {$0$};
				\fill [black] (16.38,-16.18) -- (15.5,-16.53) -- (16.31,-17.12);
				\draw [black] (20.7,-13.5) -- (27.7,-13.5);
				\fill [black] (27.7,-13.5) -- (26.9,-13) -- (26.9,-14);
				\draw (24.2,-14) node [below] {$1$};
				\draw [black] (41.284,-15.255) arc (-63.41702:-116.58298:9.126);
				\fill [black] (41.28,-15.26) -- (40.34,-15.17) -- (40.79,-16.06);
				\draw (37.2,-16.72) node [below] {$0$};
				\draw [black] (32.636,-11.235) arc (127.91326:52.08674:7.428);
				\fill [black] (32.64,-11.23) -- (33.57,-11.14) -- (32.96,-10.35);
				\draw (37.2,-9.17) node [above] {$0,\mbox{ }1$};
				\draw [black] (30.7,-16.5) -- (30.7,-23.5);
				\fill [black] (30.7,-23.5) -- (31.2,-22.7) -- (30.2,-22.7);
				\draw (30.2,-20) node [left] {$1$};
				\draw [black] (32.023,-29.18) arc (54:-234:2.25);
				\draw (30.7,-33.75) node [below] {$0,\mbox{ }1$};
				\fill [black] (29.38,-29.18) -- (28.5,-29.53) -- (29.31,-30.12);
		\end{tikzpicture}}
		\qquad
		\subfloat[\label{FalphaisnotB}]{\centering \begin{tikzpicture}[scale=0.15]
				\tikzstyle{every node}+=[inner sep=0pt]
				\draw [black] (12.4,-9.1) -- (15.39,-11.58);
				\draw (11.85,-7.66) node [left] {Start};
				\fill [black] (15.39,-11.58) -- (15.1,-10.69) -- (14.46,-11.46);
				\draw [black] (17.7,-13.5) circle (3);
				\draw (17.7,-13.5) node {$0$};
				\draw [black] (17.7,-26.5) circle (3);
				\draw (17.7,-26.5) node {$0$};
				\draw [black] (30.7,-13.5) circle (3);
				\draw (30.7,-13.5) node {$1$};
				\draw [black] (43.7,-13.5) circle (3);
				\draw (43.7,-13.5) node {$1$};
				\draw [black] (30.7,-26.5) circle (3);
				\draw (30.7,-26.5) node {$0$};
				\draw [black] (20.7,-13.5) -- (27.7,-13.5);
				\fill [black] (27.7,-13.5) -- (26.9,-13) -- (26.9,-14);
				\draw (24.2,-14) node [below] {$1$};
				\draw [black] (30.7,-16.5) -- (30.7,-23.5);
				\fill [black] (30.7,-23.5) -- (31.2,-22.7) -- (30.2,-22.7);
				\draw (30.2,-20) node [left] {$1$};
				\draw [black] (17.7,-16.5) -- (17.7,-23.5);
				\fill [black] (17.7,-23.5) -- (18.2,-22.7) -- (17.2,-22.7);
				\draw (17.2,-20) node [left] {$0$};
				\draw [black] (19.023,-29.18) arc (54:-234:2.25);
				\draw (17.7,-33.75) node [below] {$0,\mbox{ }1$};
				\fill [black] (16.38,-29.18) -- (15.5,-29.53) -- (16.31,-30.12);
				\draw [black] (32.023,-29.18) arc (54:-234:2.25);
				\draw (30.7,-33.75) node [below] {$0,\mbox{ }1$};
				\fill [black] (29.38,-29.18) -- (28.5,-29.53) -- (29.31,-30.12);
				\draw [black] (41.284,-15.255) arc (-63.41702:-116.58298:9.126);
				\fill [black] (41.28,-15.26) -- (40.34,-15.17) -- (40.79,-16.06);
				\draw (37.2,-16.72) node [below] {$0$};
				\draw [black] (32.636,-11.235) arc (127.91326:52.08674:7.428);
				\fill [black] (32.64,-11.23) -- (33.57,-11.14) -- (32.96,-10.35);
				\draw (37.2,-9.17) node [above] {$0,\mbox{ }1$};
		\end{tikzpicture}}
		\caption{Two $2$-automata for which the corresponding sequence $(a_i)_{i \in \N}$ has either (a) $\# E((a_i)_{i \in \N})_N \geq N^{\frac12} = N^B$ for all $N$ or (b) $\# E((a_i)_{i \in \N})_N < N^{\frac12} = N^B$ for infinitely many $N$. In case (a) this can be seen by noting $\# E((a_i)_{i \in \N})_{4^k+1} = 2^{k+1}= \sqrt{4^{k+1}}$ for all $k \in \Z_{\geq 0}$, and in case (b) by noting $\# E((a_i)_{i \in \N})_{4^k} = 2^{k-1}$ for all $k \in \Z_{\geq 0}$.}\label{Fsupatt}
	\end{figure}
\end{tr}
\begin{te}
	In Figure \ref{FautoF2.b} we saw an automaton for a non-sparse sequence. This automaton has three tied vertices that all have a spectral radius of $\sqrt 2$. Hence, $B = \log_2(\sqrt 2) = \frac 12$. 
\end{te}
About non-negative integral matrices and their spectral radii the following result is known due to D. Lind.

\begin{td}
	Let $\rho > 1$ be an algebraic integer in $\R$ such that all other roots of its minimal polynomial have smaller absolute value than $|\rho|$. Then $\rho$ is called a \textit{Perron number}.
\end{td}

\begin{tx}[\cite{lind1983entropies}, Theorem 3] \label{Tlind}
	A positive number is the spectral radius of an irreducible non-negative integral matrix if and only if some positive integral power of it is a Perron number.
\end{tx}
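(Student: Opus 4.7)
The plan is to prove the two directions of the equivalence separately. The ($\Rightarrow$) direction is handled by a refinement of Perron-Frobenius theory, while the ($\Leftarrow$) direction requires an explicit construction of a non-negative integer matrix from the algebraic data of a Perron number.

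For the ($\Rightarrow$) direction, let $A$ be an irreducible non-negative integer matrix with spectral radius $\rho$. I would invoke the full Perron-Frobenius theorem (a refinement of Proposition \ref{PPerronFrobenius}): the eigenvalues of $A$ of absolute value $\rho$ are exactly the numbers $\rho\zeta$ as $\zeta$ ranges over the $h$-th roots of unity for some positive integer $h$ (the index of imprimitivity of $A$), and every other eigenvalue of $A$ has strictly smaller absolute value. Setting $B := A^h$, whose spectrum is the set of $h$-th powers of the spectrum of $A$, the only eigenvalue of $B$ on the peripheral circle of radius $\rho^h$ is $\rho^h$ itself. Since $B$ has integer entries, its characteristic polynomial is monic over $\Z$ and has $\rho^h$ as a root, so the minimal polynomial of $\rho^h$ over $\Q$ divides it and every Galois conjugate of $\rho^h$ appears as an eigenvalue of $B$. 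These conjugates are distinct (characteristic zero) and none other than $\rho^h$ itself attains absolute value $\rho^h$, showing that $\rho^h$ is a Perron number (after passing to a multiple of $h$ if necessary to guarantee $\rho^h > 1$).

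The ($\Leftarrow$) direction is the main obstacle. One is handed a Perron number $\lambda = \rho^h$ and must exhibit an irreducible non-negative integer matrix with spectral radius exactly $\rho$. Descending from spectral radius $\lambda$ to $\rho$ is the easy half: given any irreducible non-negative integer matrix $B$ of spectral radius $\lambda$, subdividing each edge of the associated digraph into a directed path of length $h$ produces a new irreducible non-negative integer matrix $A$ whose non-zero eigenvalues are precisely the $h$-th roots of the non-zero eigenvalues of $B$; in particular $A$ has spectral radius $\lambda^{1/h} = \rho$, and strong connectedness is preserved since any intermediate vertex has a unique outgoing edge leading back to the original vertex set. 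The genuinely difficult part is realizing the Perron number $\lambda$ itself as the spectral radius of \emph{some} non-negative integer matrix, starting only from the algebraic data of its minimal polynomial. The naive companion matrix has the right spectral radius but typically has negative entries, so cannot be used directly; Lind bypasses this with a combinatorial construction from symbolic dynamics, building a shift of finite type whose topological entropy equals $\log \lambda$. I expect this algebraic-to-combinatorial passage — simultaneously arranging non-negativity, integrality, and irreducibility — to be the technical core of the proof.
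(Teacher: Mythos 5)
First, a point of order: the paper does not prove this statement at all --- it is imported from Lind with only the citation, so there is no in-paper argument to measure yours against; the comparison has to be with Lind's own proof. Your forward direction is correct and is essentially his: the peripheral spectrum of an irreducible non-negative matrix is $\rho$ times the group of $h$-th roots of unity for the index of imprimitivity $h$; the characteristic polynomial of $A^h$ is monic and integral, so the minimal polynomial of $\rho^h$ divides it and every conjugate of $\rho^h$ is an eigenvalue of $A^h$, of modulus at most $\rho^h$ with equality only for $\rho^h$ itself. (Two minor quibbles: your parenthetical about ``passing to a multiple of $h$'' accomplishes nothing --- if $\rho>1$ then $\rho^h>1$ already, while if $\rho=1$, as for a cyclic permutation matrix, no power exceeds $1$; that case is a genuine mismatch between the paper's definition of Perron number, which requires $\rho>1$, and Lind's, which allows $\rho\ge 1$, and no choice of exponent repairs it.)

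The converse, however, you have identified rather than proved. Your subdivision trick does correctly reduce ``integer root of a Perron number'' to ``Perron number'': subdividing every edge into a path of length $h$ gives $\det(I-tA_{G'})=\det(I-t^hA_G)$, so the nonzero spectrum is $h$-th-rooted and strong connectedness is preserved. But the remaining step --- realizing an arbitrary Perron number $\lambda$ as the spectral radius of \emph{some} irreducible non-negative integral matrix --- is where all of Lind's work lies, and ``building a shift of finite type whose topological entropy equals $\log\lambda$'' is a restatement of the goal, not an argument. What is missing is Lind's cone construction: let $d$ be the degree of $\lambda$ and let $M$ be multiplication by $\lambda$ on $\Z[\lambda]\cong\Z^d$; the Perron condition $\lambda>|\lambda_i|$ makes the $\lambda$-eigendirection strictly dominant, so there is an open cone $C$ around it with $M(\overline{C}\setminus\{0\})\subset C$; one then selects finitely many integer vectors $w_1,\dots,w_n\in C$ such that each $Mw_i$ is a non-negative integer combination $\sum_j b_{ij}w_j$, and checks that $B=(b_{ij})$ is non-negative, integral, has $\lambda$ as an eigenvalue with positive eigenvector data (forcing $\rho(B)=\lambda$ by Perron--Frobenius), and can be arranged to be irreducible, indeed primitive. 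Until that realization argument (or an equivalent) is supplied, you have proved one direction and reduced the other to the theorem being proved.
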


Since we also work with irreducible non-negative matrices we can apply this result to the value $\beta := m^B$.

\begin{tp}\label{Pintegral}
	Let $B$ be defined as in Theorem \ref{Tupperalpha}. Then
	\begin{enumerate}[(i)]
		\item $B$ can be written as $\log_m(\beta)$ with $\beta$ an integral root of a Perron number.
		\item $B$ is either transcendental or rational.
	\end{enumerate} 
\end{tp}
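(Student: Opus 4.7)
The proof rests on two external results: Lind's theorem (Theorem \ref{Tlind} just above) for part \emph{(i)}, and the classical Gelfond--Schneider theorem for part \emph{(ii)}. The plan is to first identify $\beta = m^B$ as the spectral radius of a specific irreducible non-negative \emph{integral} matrix coming from the automaton, and then run a short transcendence argument to pin down the arithmetic nature of $B$ itself.

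For \emph{(i)}, note that $V'$ is a finite set of vertices of the automaton, so the maximum defining $B$ is attained at some tied vertex $v^{\ast}\in V'$, giving $\beta = m^B = \rho_{v^{\ast}}$. The adjacency matrix $A_{v^{\ast}}$ of the strongly connected subdigraph $D_{v^{\ast}}$ has non-negative integer entries (its entries are edge counts), and by Lemma \ref{Lstrconnirr} it is irreducible since $D_{v^{\ast}}$ is strongly connected by construction. Applying Theorem \ref{Tlind} to $A_{v^{\ast}}$, some positive integer power $\beta^n = \rho_{v^{\ast}}^n$ is a Perron number, which exhibits $\beta$ as an integral $n$-th root of a Perron number, as claimed.

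For \emph{(ii)}, suppose towards a contradiction that $B$ is algebraic but irrational. By part \emph{(i)}, $\beta = m^B$ is algebraic, since it satisfies $X^n - \rho = 0$ for some Perron number $\rho$ (Perron numbers are algebraic integers by definition). Recall the Gelfond--Schneider theorem: if $a$ is an algebraic number with $a\notin\{0,1\}$ and $b$ is an irrational algebraic number, then $a^b$ is transcendental. Applying this with $a = m \geq 2$ (an integer, hence algebraic and distinct from $0$ and $1$) and $b = B$, one concludes that $m^B = \beta$ would be transcendental, contradicting its algebraicity. Therefore $B$ is either transcendental or rational.

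The main obstacle is not really an obstacle but a verification: one must check that the hypotheses of Lind's theorem are met for $A_{v^{\ast}}$, namely integrality (obvious), non-negativity (obvious), and irreducibility (via Lemma \ref{Lstrconnirr} and the definition of $D_{v^{\ast}}$ as strongly connected). Everything else is a direct application of two theorems quoted from the literature, and no new estimates are required.
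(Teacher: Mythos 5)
Your proposal is correct and follows essentially the same route as the paper: part \emph{(i)} is a direct application of Theorem \ref{Tlind} to the irreducible non-negative integral adjacency matrix $A_{v^\ast}$ realizing the maximum (which is exactly what the paper means by ``follows from Theorems \ref{Tupperalpha} and \ref{Tlind}''), and part \emph{(ii)} is the same Gelfond--Schneider argument with $a=m$ and $b=B$. Your write-up simply makes explicit the hypothesis checks that the paper leaves implicit.
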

\begin{proof}
	The first claim follows directly from Theorems \ref{Tupperalpha} and \ref{Tlind}.
	
	For the second part we use the Gelfond-Schneider Theorem of which a proof can be found in \cite[Appendix~1, Corollary~2]{lang2002algebra}. It states that for any two algebraic numbers $a, b$ such that $a \neq 0, 1$ and $b$ is irrational, $a^b$ is transcendental. Since $m^B = \beta$ is algebraic, we find that $B$ cannot be an irrational algebraic number and the second claim follows.
\end{proof}
\begin{tr}
	Theorem \ref{Tlind} also says that all integral roots of Perron numbers can be obtained as the spectral radius of some irreducible non-negative matrix. This also implies that each integral root of a Perron number can occur as $\beta$, though it might not be possible to do this for every $m$ since we have the restriction of having at most $m$ outgoing edges. 
\end{tr}

\subsection*{Acknowledgments}
The automata in this paper were drawn using the Finite State Machine Designer by Evan Wallace \cite{drawautomatons}. 
\addcontentsline{toc}{chapter}{Bibliography}  
\printbibliography
\end{document}